\documentclass[12pt, a4paper]{article}
\usepackage[english]{babel}
\usepackage{amsfonts, amssymb, euscript, amsmath, mathtools, amsthm, comment}
\usepackage{authblk}
\usepackage{graphicx, wrapfig, multicol, capt-of, subcaption}
\usepackage[colorinlistoftodos]{todonotes}
\usepackage{hyperref}
\hypersetup{
    colorlinks=true,
    linkcolor=blue,
    citecolor=cyan,
    filecolor=magenta,      
    urlcolor=cyan,
    pdftitle={Overleaf Example},
    pdfpagemode=FullScreen,
    }
\DeclareGraphicsExtensions{.pdf,.png,.jpg}
\usepackage[utf8]{inputenc}
\usepackage[T1]{fontenc}
\usepackage{indentfirst}
\title{Bifurcations and Structural Stability of Generic PC-HC Families \thanks{This article is an output of a research project (HSE-BR-2025-84) implemented as part of the Basic Research Program at HSE University.}}
\author{Alexey Dorovskiy \thanks{HSE University, Faculty of Mathematics}}
\date{}

\begin{document}

\newcounter{theorem}[section]
\def\thetheorem{\arabic{section}.\arabic{theorem}}
\newcounter{definition}[section]
\def\thedefinition{\arabic{section}.\arabic{definition}}
\newcounter{proposition}[section]
\def\theproposition{\arabic{section}.\arabic{proposition}}
\newcounter{lemma}[section]
\def\thelemma{\arabic{section}.\arabic{lemma}}

\newenvironment{Theorem}{\refstepcounter{theorem}\par {\textbf{Theorem \thetheorem.}} \itshape}{}
\newcommand{\thm}{\par \refstepcounter{theorem} \textbf{Theorem \thetheorem.}}
\newcommand{\defi}{\par \refstepcounter{definition} \textbf{Definition \thedefinition. }}
\newcommand{\prop}{\par \refstepcounter{proposition} \textbf{Proposition \theproposition. }}
\newcommand{\lemma}{\par \refstepcounter{lemma} \textbf{Lemma \thelemma. }}

\maketitle
\begin{abstract}
In this paper the structural stability of generic families of vector fields of the PC-HC class on the two-dimensional sphere $S^2$ is proved. A classification of these families up to moderate equivalence in neighborhoods of their large bifurcation supports is presented, based on such invariants as the configuration and the characteristic set. The realization lemma is proved. Furthermore, bifurcation diagrams for the considered class of families are constructed.
\end{abstract}

\noindent \textbf{Key words:} structural stability, vector fields on the two-sphere, large bifurcation support, sparkling saddle connections, bifurcation diagrams.

\vfill
\pagebreak

\tableofcontents

\pagebreak

\section{Introduction}

\subsection{Glocal Bifurcations}

The classical bifurcation theory on the real plane consists of two main parts: the first studies local bifurcations occurring in neighborhoods of singular points, and the second studies semi-local bifurcations that may occur in neighborhoods of polycycles and parabolic cycles. More recently, a new direction has been developed, studying global bifurcations on the two-dimensional sphere $S^2$. It is based on a set of conjectures put forward by V.\,I.~Arnold in 1985~(\cite{Arnold's conjectures}), as well as on the work of Sotomayor~\cite{Sotomayor}. The main distinction of this new direction from classical theory arises from the appearance of the effect of sparkling saddle connections, which was discovered in~\cite{sparkling saddle connections}.

The main problems of the new theory are:
\begin{enumerate}
	\item To study the structural stability of finite-parameter families of vector fields on the two-dimensional sphere.
	\item To classify the bifurcations occurring in these families.
\end{enumerate}

Sotomayor described all the degeneracies that can appear in one-parameter families (unfoldings of so-called {\itshape quasi-generic} vector fields). He identified six types of bifurcations and initiated the study of their structural stability. This investigation was completed in~\cite{one-parameter family proof} and~\cite{one-parameter family proof 2}, where a complete classification of bifurcations of quasi-generic families was also given.

One of the basic conjectures of the theory of global bifurcations turned out to be false. It assumed that all finite-parameter families of vector fields in general position are structurally stable. An example of a structurally unstable family in general position with three parameters was constructed in~\cite{three-parameter family counterexample}. The instability arises due to sparkling saddle connections that accumulate in a neighborhood of a polycycle. Taking this result into account, the new problems of the theory of {\itshape glocal} bifurcations ({\itshape global} in the phase variable and {\itshape local} in the parameter) can be formulated as follows:
\begin{enumerate}
	\item To study the structural stability (or instability) of generic two-parameter families of vector fields on the two-dimensional sphere with respect to weak equivalence.
	\item To classify the bifurcations in these families.
	\item To find the ``boundary'' in the space of three-parameter families between stable and unstable families.
\end{enumerate}

One possible approach to the first problem is to partition the set of all generic families into classes and to prove that the families of each class are structurally stable (or unstable). Our goal is precisely the study of one such class: PC-HC families.

Throughout this paper, the term {\itshape vector fields} will refer to infinitely smooth vector fields on $S^2$ with isolated singular points. All homeomorphisms of the two-dimensional sphere $S^2$ will be assumed to be orientation-preserving.

\subsection{PC-HC Families}
In this paper we study glocal bifurcations in a broad class of generic two-parameter families of vector fields on the sphere with the following degeneracy: a semi-stable limit cycle $\gamma$ and a saddle-node $N$ with a homoclinic trajectory. Such families form the PC-HC class (from Parabolic Cycle---Homoclinic Curve); a precise definition will be given in Section~\ref{condition for a PC-HC family to be generic}. The main results of the paper are as follows:
\begin{itemize}
    \item a classification of the large bifurcation supports of PC-HC families is given: two families of the same class undergo the same bifurcations (up to minor details)
    \item the bifurcations occurring in generic PC-HC families are described
    \item the structural stability of generic PC-HC families is proved
\end{itemize}

Figure~\ref{PC-HC (A) vector field} shows the most interesting part of the phase portrait of the unperturbed vector field of the PC-HC class. Without loss of generality, we may assume that the nonzero eigenvalue of the saddle-node is positive; otherwise we reverse time. Thus, the saddle-node $N$, which in generic families of the PC-HC class has multiplicity 2, has three separatrices: two stable---$\beta_1$ and $\beta_2$---and one unstable---$\gamma_h$ (the index $h$ comes from the word homoclinic), which returns into the interior of the parabolic sector of the saddle-node and, together with the saddle-node, forms a closed curve
$$
\overline{\gamma}_h = \gamma_h \cup N.
$$

We now proceed to a more detailed description of the results.

\subsection{Configurations}

Consider the set $C(v)$ for a field $v$ of the PC-HC class:
$$
C(v) = \overline{\gamma}_h \cup \gamma \cup \beta_1,
$$
where $\beta_1$ is the stable separatrix of the saddle-node $N$ lying in the annulus bounded by the curves $\overline{\gamma}_h$ and $\gamma$.
The topology of this set, as well as the behavior of the trajectories of the field $v$ in its neighborhood, will be described by means of the following construction.

\defi We call the {\itshape configuration} of the field $v$ the word $a_1a_2a_3$ of three symbols in the alphabet $\{0, 1\}$ according to the following conditions:
\begin{enumerate}
    \item[i)] $a_1=1$, if the curves $\overline{\gamma}_h \cup \gamma$ have consistent time orientations, \\ $a_1=0$, if their orientations are opposite;
    
    \item[ii)] $a_2=1$, if $\alpha(\beta_1) = \gamma$ (i.e., $\gamma$ is the $\alpha$-limit set of the separatrix $\beta_1$), \\ $a_2=0$, if $\alpha(\beta_1) \ne \gamma$;
    
    \item[iii)] $a_3=1$, if the parabolic limit cycle $\gamma$ is repelling from the side of the curve $\overline{\gamma}_h$, \\ $a_3=0$, if the parabolic cycle is attracting from the side of the curve $\overline{\gamma}_h$.
\end{enumerate}

\begin{figure}
\center{\includegraphics[scale=0.77]{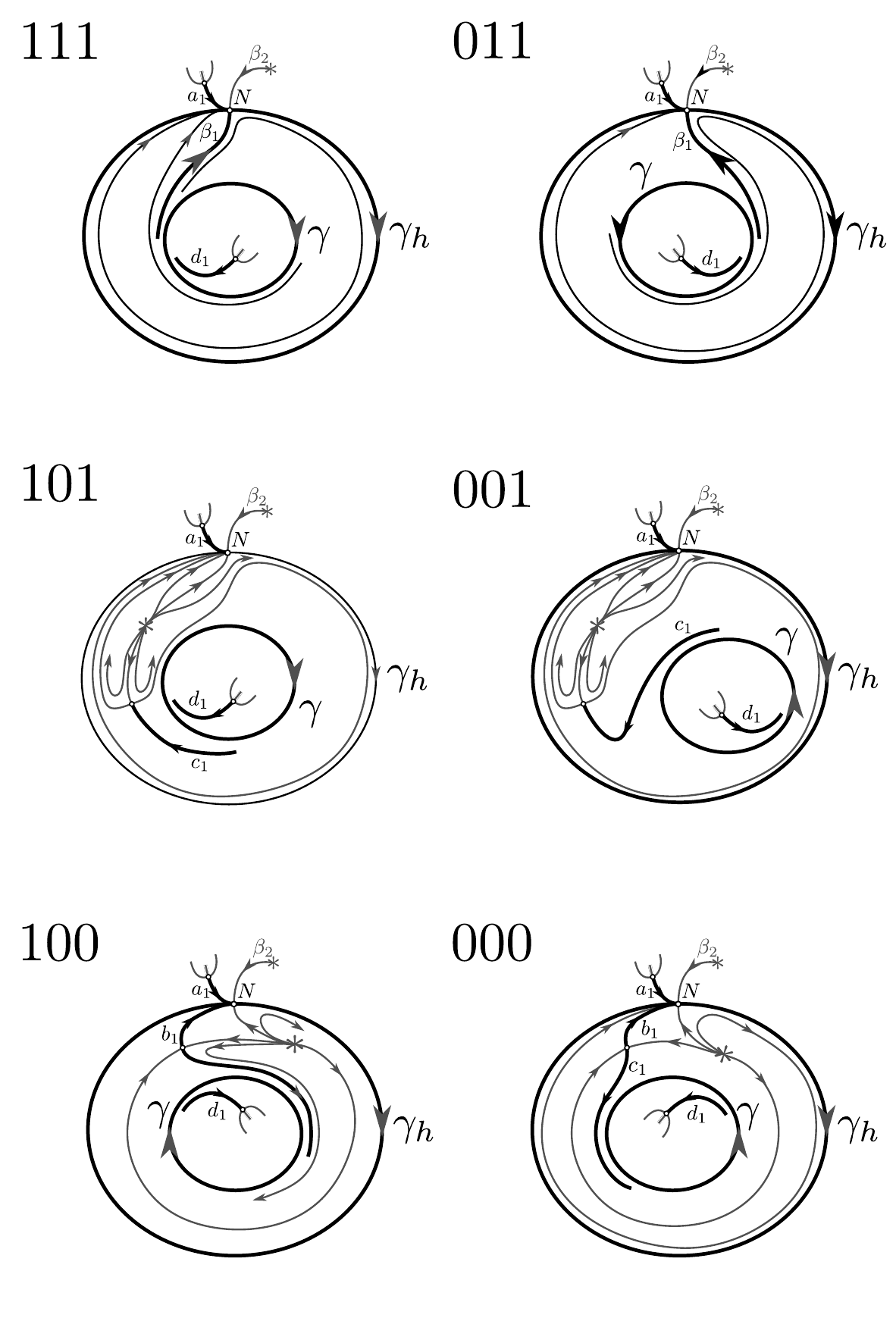}}
\caption{Examples of phase portraits of unperturbed fields from families of different configurations of the PC-HC class.
The asterisks denote hyperbolic sources of the field.
The bold curves highlight the trajectories belonging to the large bifurcation supports, whose definition is recalled later.
}
\label{PC-HC configurations (examples)}
\end{figure}

\lemma Every field $v$ of the PC-HC class has one of six configurations.

{\itshape Proof} of this lemma is simple enough to be given in the introduction. Indeed, the explicit form of the set $C(v)$ shows that its topology is determined by two properties, corresponding to the letters $a_1$ and $a_2$ in the configuration. The field $v$ can be defined in a small neighborhood of the set $C(v)$ in only two non-equivalent ways in the sense of orbital topological equivalence: the field $v$ is determined by whether the parabolic cycle $\gamma$ is repelling or attracting from the side of the curve $\overline{\gamma}_h$.
Each of the three properties is binary, so the letters $a_i$ in the configuration word can take two values, 0 and 1. Any field $v$ of the PC-HC class in a small neighborhood of the set $C(v)$ is uniquely determined, up to orbital topological equivalence, by the three specified properties.

There are $2^3 = 8$ configuration words in total, but taking into account the fact that the separatrix $\beta_1$ cannot wind onto the limit cycle $\gamma$ if the latter is attracting from the side of the saddle-node $N$, we can conclude that two of the eight cases do not occur, i.e., there are no vector fields $v$ of the PC-HC class whose configurations are words with $a_2=1$ and $a_3=0$. The remaining six cases are realizable. Examples of unperturbed fields corresponding to each of these cases are given in Figure~\ref{PC-HC configurations (examples)}.\qed

\begin{figure}[t]
\center{\includegraphics[scale=0.58]{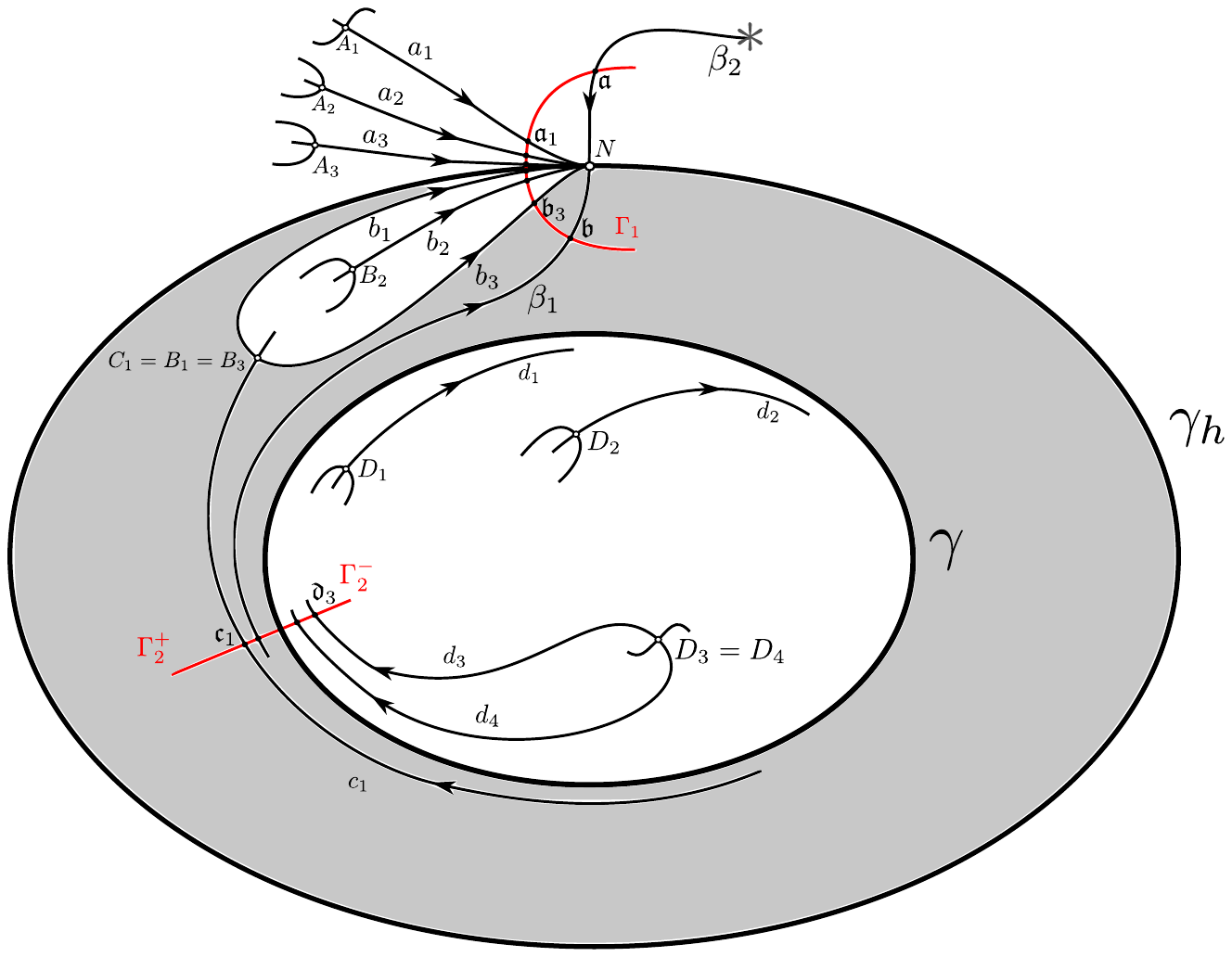}}
\caption{Phase portrait of the unperturbed field from a PC-HC family of configuration 111. The gray-shaded region belongs to the set $G$. The definition of the set $G$ is given in Section~\ref{subsection definition of large bifurcation support}.
}
\label{PC-HC (A) vector field}
\end{figure}

\subsection{Characteristic Sets and Liaison}
\label{subsection about characteristic sets}
Consider the phase portrait of the unperturbed field $v_0$ of a family of the PC-HC class. Let us draw a transversal curve $\Gamma_1$ in a neighborhood of the saddle-node $N$, intersecting all trajectories of its parabolic sector, as well as the stable separatrices $\beta_1$ and $\beta_2$ of the saddle-node $N$ at points $\mathfrak{a}$ and $\mathfrak{b}$ respectively. (See Figure~\ref{PC-HC (A) vector field}.) Let us introduce a smooth coordinate $\zeta$ on this curve so that the intersection of the transversal $\Gamma_1$ with the stable separatrix $\beta_1$ has coordinate $\zeta(\mathfrak{a}) = -1$, with the separatrix $\beta_2$ has coordinate $\zeta(\mathfrak{b})=1$, and with the homoclinic curve $\gamma_h$ has coordinate $\zeta=0$. Let the separatrices of saddles tending to the saddle-node $N$ in forward time intersect the transversal $\Gamma_1$ at points $\{\mathfrak{a}_i\}_i$ and $\{\mathfrak{b}_j\}_j$, whose coordinates satisfy $\zeta(\mathfrak{a}_i) < 0 \; \forall i$ and $\zeta(\mathfrak{b}_j) > 0 \; \forall j$.

Denote $\mathcal{L}_1 \vcentcolon = \{ \zeta(\mathfrak{a}_i) \mid 1 \le i \le l \}$ and $\mathcal{L}_2 \vcentcolon = \{ \zeta(\mathfrak{b}_j) \mid 1 \le j \le m \}$---two finite subsets on an interval.

Now draw a transversal section $\Gamma_2$ to the non-hyperbolic cycle $\gamma$. Introduce a smooth coordinate $x$ on it so that the intersection of this transversal with the limit cycle $\gamma$ has zero coordinate.
On the positive semi-interval $\Gamma_2^+$ of the transversal $\Gamma_2$ with the origin removed, one can introduce a time function $T^+$ mapping this interval to the positive half-axis, so that the Poincaré return map $\mathcal{P}$, defined on the semi-transversal $\Gamma_2^+$, takes the form of a unit shift in the new chart $T^+$:
$$
T^+(\mathcal{P}(x)) = T^+(x) + 1.
$$
An analogous chart $T^-$ can be introduced on the negative semi-transversal $\Gamma_2^-$. The existence of such time charts is proved in~\cite{one-parameter family proof 2}. The orbit space of the Poincaré map $\mathcal{P}$ on the curve $\Gamma_2$ can be parameterized by points of two semi-intervals $[b^{\pm}, \mathcal{P}(b^{\pm}))$, where $b^+$ and $b^-$ are two fixed points on the transversal $\Gamma_2$, chosen in a neighborhood of the origin $x=0$ so that $x(b^-) < 0 < x(b^+)$. Equivalently, this orbit space can be represented as two circles $S^1_{\pm} = \mathbb{R}_{\ge 0} / \mathbb{Z}$ (the circles $S^1_+$ and $S^1_-$ correspond to orbits lying in the regions $x>0$ and $x<0$ respectively).

Let the separatrices of saddles tending to the parabolic cycle in forward or backward time, as well as the separatrix $\beta_1$ if it tends to the cycle in backward time, intersect the semi-intervals $[b^{\pm}, \mathcal{P}(b^{\pm}))$ at points $\{\mathfrak{c}_r\}_r$ and $\{\mathfrak{d}_s\}_s$ (let the points $\{\mathfrak{c}_r\}_r$ correspond to separatrices lying in the topological annulus bounded by the curves $\gamma$ and $\overline{\gamma}_h$, and the points $\{\mathfrak{d}_s\}_s$ correspond to separatrices lying in the topological disk bounded by the cycle $\gamma$).
Denote the sets of coordinates of these points on the circle by $\mathcal{A}^+ \vcentcolon = \{T^+(\mathfrak{c}_r) \; \mbox{(mod 1)} \mid 1 \le r \le k\}$ and $\mathcal{A}^- \vcentcolon = \{T^-(\mathfrak{d}_s) \; \mbox{(mod 1)} \mid 1 \le s \le n\}$.

\defi We call the sets $\mathcal{A}^+$ and $\mathcal{A}^-$ {\itshape non-synchronized}, if for every $\alpha \in \mathbb{R}$ the inequality
$$
\# \left( (\mathcal{A}^+ + \alpha) \cap \mathcal{A}^- \right) \le 1
$$
holds.

\begin{figure}[p]
    \centering
    \begin{subfigure}{\textwidth}
        \centering
        \includegraphics[scale=0.63]{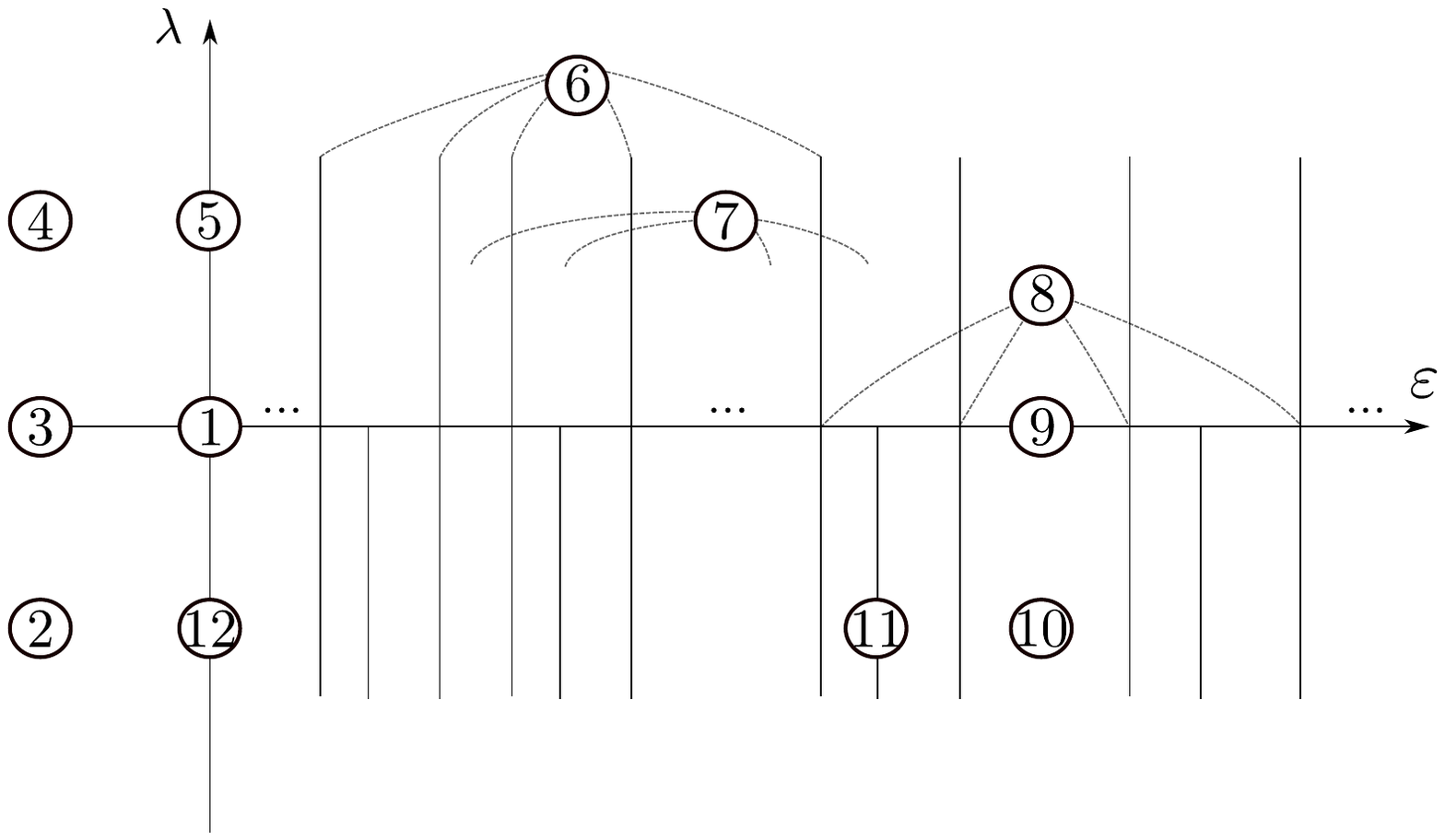}
        \caption{Bifurcation diagram for a PC-HC family of configuration 111. \\ Numbers 1 and 8 correspond to fields with codimension-2 degeneracies. \\ Numbers 3, 5, 6, 9, 11, and 12 correspond to fields with codimension-1 degeneracies. \\ Numbers 2, 4, 7, and 10 correspond to structurally stable fields.}
        \label{Bifurcation diagramm (introduction)}
    \end{subfigure}
    \par\vspace{60pt}
    \begin{subfigure}{\textwidth}
        \centering
        \includegraphics[scale=0.63]{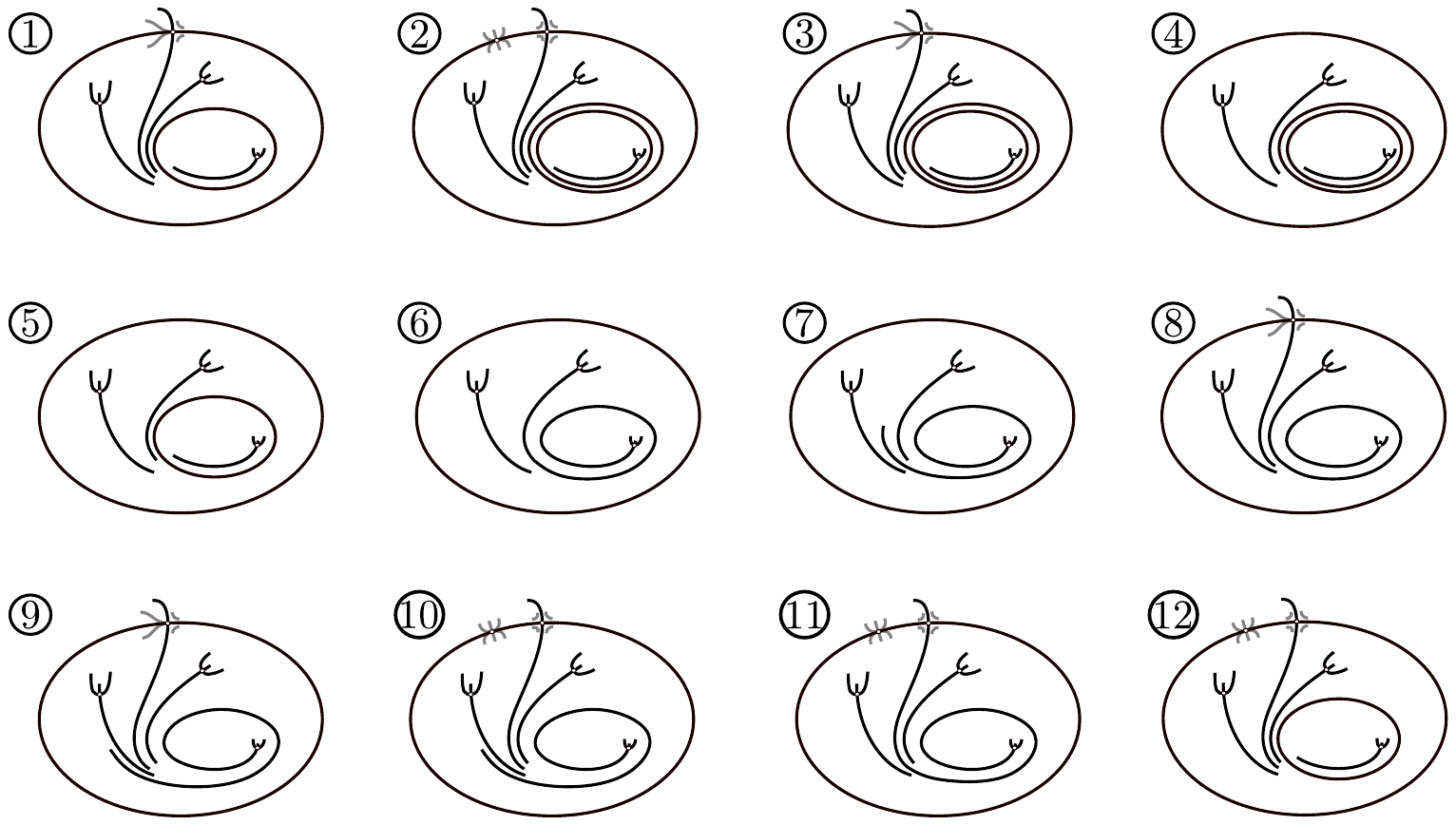}
        \caption{Phase portraits of fields from a PC-HC family of configuration 111.}
        \label{Bifurcation phase portraits (introduction)}
    \end{subfigure}
    \caption{Bifurcation scheme}
\end{figure}

Introduce an equivalence relation on the four obtained sets of coordinates: $\mathcal{L}_1$, $\mathcal{L}_2$, $\mathcal{A}^+$, and $\mathcal{A}^-$. We say that two points from the same set belong to the same equivalence class if the separatrices defining them belong to the same saddle.

Each such equivalence class may contain either one or two points, and pairs of points from different equivalence classes cannot interleave: if there are two equivalence classes $\{a, b\}$ and $\{c, d\}$, then either $(a, b) \subset (c, d)$, or $(c, d) \subset (a, b)$, or $(a, b) \cap (c, d) = \emptyset$.

\defi A finite set on a circle or an interval, partitioned into non-interleaving one- and two-element equivalence classes, will be called {\itshape marked}.

We also introduce an additional {\itshape liaison relation} between the sets $\mathcal{L}_2$ and $\mathcal{A}^+$: we say that points $\mathfrak{b}_j \in \mathcal{L}_2$ and $\mathfrak{c}_r \in \mathcal{A}^+$ are in the liaison relation if the separatrices defining them belong to the same saddle.

Define for the collections $\mathcal{A}^+$ and $\mathcal{A}^-$ the set of pairwise differences of their elements:
$$
\Delta(\mathcal{A}^+, \mathcal{A}^-) := \{T^+(\mathfrak{c}_i) - T^-(\mathfrak{d}_j) \; \mbox{(mod 1)} \mid 1 \le i \le k, 1 \le j \le n \}.
$$

\defi Two marked sets $\mathcal{L}_1$ and $\mathcal{L}_2$ on an interval and two marked finite sets $\mathcal{A}^+$ and $\mathcal{A}^-$ on a circle, together with the liaison relation between the sets $\mathcal{L}_2$ and $\mathcal{A}^+$, are called the {\itshape characteristic sets} of a vector field of the PC-HC class.

\defi Two marked sets $\mathcal{L}$ and $\tilde {\mathcal{L}}$ on an interval are called {\itshape equivalent} if they are mapped to each other by a homeomorphism of the interval preserving the partition into equivalence classes.
A pair of finite marked sets $\mathcal{A}^+$ and $\mathcal{A}^-$ on a circle is {\itshape equivalent} to a pair of finite sets $\mathcal{B}^+$ and $\mathcal{B}^-$ if $|\mathcal{A}^+| = |\mathcal{B}^+|$ and $|\mathcal{A}^-| = |\mathcal{B}^-|$, and there exists $\alpha \in \mathbb{R}$ such that the sets $\Delta(\mathcal{A}^+, \mathcal{A}^-)$ and $\Delta(\mathcal{B}^+, \mathcal{B}^-) + \alpha \; \mbox{(mod 1)}$ are enumerated in the same order. Namely, if
$$
\lambda_{ij} = T^+(\mathfrak{c}_i) - T^-(\mathfrak{d}_j) \; \mbox{(mod 1)}, \quad \Delta(\mathcal{A}^+, \mathcal{A}^-) = \{\lambda_{ij} \mid 1 \le i \le k, 1 \le j \le n \}
$$
and
$$
\tau_{ij} = T^+(\tilde{\mathfrak{c}}_i) - T^-(\tilde{\mathfrak{d}}_j) \; \mbox{(mod 1)}, \quad \Delta(\mathcal{B}^+, \mathcal{B}^-) = \{\tau_{ij} \mid 1 \le i \le k, 1 \le j \le n \},
$$
then for $\alpha \in \mathbb{R}$ and any $i$, $j$, $i'$, $j'$ the implication
$$
\lambda_{ij} > \lambda_{i'j'} \; \Longrightarrow \; \tau_{ij} + \alpha \: \mbox{(mod 1)} > \tau_{i'j'} + \alpha \: \mbox{(mod 1)}
$$
holds.
\par Two collections of characteristic sets $\mathcal{L}_1$, $\mathcal{L}_2$, $\mathcal{A}^+$, $\mathcal{A}^-$ and $\tilde {\mathcal{L}}_1$, $\tilde {\mathcal{L}}_2$, $\tilde {\mathcal{A}}^+$, $\tilde {\mathcal{A}}^-$ are called {\itshape equivalent} if the sets on the intervals are pairwise equivalent ($\mathcal{L}_1$ with $\tilde {\mathcal{L}}_1$ and $\mathcal{L}_2$ with $\tilde {\mathcal{L}}_2$), the pairs of sets on the circles are equivalent, and these equivalences respect the liaison relation between the sets: $\mathcal{L}_2$ with $\mathcal{A}^+$ and $\tilde {\mathcal{L}}_2$ with $\tilde {\mathcal{A}}^+$.

\subsection{Genericity Conditions}
\label{condition for a PC-HC family to be generic}

A generic infinitely smooth vector field $v$ on the two-dimensional sphere $S^2$ with a PC-HC class degeneracy satisfies the following conditions:
\begin{enumerate}
	\item the field has one parabolic cycle $\gamma$ of multiplicity 2, i.e., its Poincaré map $\mathcal{P}$ has the form $x \to x + ax^2 +  \dots$, where $a \ne 0$;
    \item the field has one homoclinic curve $\gamma_h$ of the saddle-node singular point $N$, i.e., a curve that tends to the saddle-node point $N$ as $t \to \pm \infty$, tangentially to the center manifold;
	\item the field has only finitely many singular points and limit cycles, and all singular points and cycles are hyperbolic, with the exception of the saddle-node $N$ and the cycle $\gamma$;
	\item there are no saddle connections between saddles and no separatrix loops;
    \item the characteristic sets on the circle $\mathcal{A}^+$ and $\mathcal{A}^-$ are non-synchronized (the Malta--Palis condition).
\end{enumerate}

Consider a two-parameter family $V$ of vector fields on the sphere $S^2$ that unfolds the given field $v$.
Such a family contains a codimension-2 degeneracy and can therefore be parameterized by a two-dimensional variable, which we denote by $\mu = (\varepsilon, \lambda)$. We impose the following genericity conditions on this family:
\begin{enumerate}
	\item The family $V=\{v_{\varepsilon, \lambda}\}_{(\varepsilon, \lambda)}$ is transverse to the hypersurface consisting of vector fields with a degenerate saddle-node singular point. Consider the center manifold $W^c$ of the extended system:
    \begin{equation*}
	    \left\{
	    \begin{aligned}
            \dot x &= v_{\varepsilon, \lambda}(x), \\
            \dot \varepsilon &= 0, \\
            \dot \lambda &= 0.
        \end{aligned}
        \right.
    \end{equation*}
    If the restriction of the local family $V$ to the manifold $W^c$ has the form
	$$
	\dot x = f(x, \varepsilon, \lambda), \quad \dot \varepsilon = 0, \quad \dot \lambda = 0,
	$$
    where $(x, \varepsilon, \lambda) \in (\mathbb{R}^3, 0)$, $f(0, \varepsilon, 0) = 0$, $\frac{\partial f(0, \varepsilon, 0)}{\partial x} = 0$, and $\frac{\partial^2 f(0, \varepsilon, 0)}{\partial x^2} \ne 0$ for every $\varepsilon \in (\mathbb{R}, 0)$, then the condition is equivalent to requiring that
	$$
	\left. \frac{\partial f(0, \varepsilon, \lambda)}{\partial \lambda} \right|_{\lambda=0} \ne 0.
	$$
	\item If $\mathcal{P}_{\varepsilon, \lambda}$ is the Poincaré map of the parabolic cycle $\gamma$, defined on the transversal $\Gamma_2$ and depending on the parameters $\varepsilon$ and $\lambda$, then in the coordinate $x$ on the transversal $\Gamma_2$, the diffeomorphisms $\mathcal{P}_{\varepsilon, \lambda}$ at fixed $\lambda$ form a saddle-node family:
	$$
	\mathcal{P}_{0, \lambda}(0) = 0, \; \left. \frac{\partial \mathcal{P}_{0, \lambda} (x)}{\partial x} \right|_{x=0} = 1, \; \left. \frac{\partial^2 \mathcal{P}_{0, \lambda} (x)}{\partial x^2} \right|_{x=0} \ne 0, \; \left. \frac{\partial \mathcal{P}_{\varepsilon, \lambda} (0)}{\partial \varepsilon} \right|_{\varepsilon=0} \ne 0.
	$$
\end{enumerate}

A description of the bifurcation scenarios that can occur in families of vector fields satisfying the stated genericity conditions will be given in Section~\ref{chapter about bifurcation diagrams}. Figure~\ref{Bifurcation diagramm (introduction)} presents the bifurcation diagram for a family of configuration 111, and Figure~\ref{Bifurcation phase portraits (introduction)} shows the phase portraits of the corresponding perturbed vector fields.

\begin{Theorem}
In generic two-parameter families, the only unperturbed vector fields that simultaneously possess a non-hyperbolic cycle and a homoclinic trajectory of a saddle-node are those satisfying the genericity conditions listed above.
\end{Theorem}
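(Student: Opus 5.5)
The natural route is a transversality argument in the Banach (or Fréchet) space of smooth two-parameter families, combined with a codimension count for every degeneracy that could accompany the two principal ones. The space of $C^\infty$ vector fields on $S^2$ is stratified by degeneracy type. Two facts are classical (Sotomayor, Kupka--Smale): fields with a non-hyperbolic cycle of multiplicity $\ge 2$ form a Banach submanifold of codimension one, and fields with a saddle-node of multiplicity $2$ form a Banach submanifold of codimension one. A homoclinic trajectory of a saddle-node is a non-transverse coincidence of the unstable separatrix $\gamma_h$ with the parabolic sector. This coincidence is automatic once the saddle-node exists, because the parabolic sector is open. By the Thom--Abraham transversality theorem, a generic two-parameter family $V$ is transverse to every such stratum. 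Consequently it meets strata of codimension $\ge 3$ nowhere, and it meets codimension-2 strata only at isolated parameter values, transversally. Since the intersection of the two codimension-one strata above has codimension two, the unperturbed field $v=v_0$ lies in no further degeneracy stratum. The plan is therefore to list every possible additional degeneracy, show that each raises the codimension to at least $3$, and conclude that its negation is exactly one of conditions 1--5.

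The key steps, in order, are as follows.
\begin{enumerate}
\item Conditions 1 and 2. A cycle of multiplicity $\ge 3$ ($a=0$) has codimension $2$ by itself. Together with the saddle-node this gives codimension $3$. The same holds for a saddle-node of multiplicity $\ge 3$, or one whose stable separatrix (rather than its unstable one) returns, or one with extra degeneracy in the hyperbolic direction.
\item Condition 3. Any further non-hyperbolic singular point or cycle adds codimension $\ge 1$. Finiteness of singular points and cycles is the Kupka--Smale/Peixoto type genericity result on $S^2$, applied along the family.
\item Condition 4. A saddle connection or separatrix loop is a non-transverse intersection of one-dimensional manifolds in the plane. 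It therefore adds codimension one, and in the family it becomes a codimension-3 event.
\item Family conditions. Transversality of $V$ to each of the two codimension-one strata at $v_0$ is precisely the non-vanishing of the derivatives in family conditions 1 and 2. This uses the parameters $(\varepsilon,\lambda)$ as coordinates normal to the two strata, via the center manifold reduction and the Poincaré map $\mathcal P_{\varepsilon,\lambda}$.
\end{enumerate}

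The main obstacle is condition 5, the Malta--Palis non-synchronization. Here the degeneracy is not a submanifold condition on the separatrices themselves; it is a condition on the moduli $T^\pm$ of the orbit spaces of the parabolic Poincaré map. I would first show that the positions $T^+(\mathfrak c_r)$ and $T^-(\mathfrak d_s) \pmod 1$ can be moved independently by perturbations supported near the corresponding separatrices, away from $\gamma$ and $N$. Such perturbations are obtained by a small rotation inserted in a flow box, and they change neither the parabolic cycle nor the homoclinic loop. The map from the codimension-2 stratum to the tuple of these coordinates is then a submersion. Synchronization, that is, two coincidences $\lambda_{ij}=\lambda_{i'j'}$ for distinct pairs, is one equation on the tuple. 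This is the same argument Malta and Palis used in the one-parameter parabolic-cycle case. Its preimage is therefore a codimension-one subset of the stratum, of total codimension $3$, which a generic two-parameter family avoids. The delicate point is to check that these coordinates are well defined and depend smoothly on the field: $T^\pm$ is only defined up to additive constants, but the differences $\lambda_{ij}$ are intrinsic. One must also check that the count is unchanged when $\beta_1$ itself winds onto $\gamma$ (configurations with $a_2=1$); in that case it contributes one of the points $\mathfrak c_r$ and is treated the same way.
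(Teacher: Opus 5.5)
Your proposal takes essentially the same route as the paper. The paper's argument is the same codimension count: the two degeneracies already cost codimension two (the homoclinic return into the open parabolic sector costs nothing), each violated genericity condition adds one more equality-type constraint, and generic two-parameter families avoid the resulting codimension-three sets; your flow-box/submersion treatment of the Malta--Palis condition simply spells this count out in more detail. Note, though, that the paper explicitly does not claim a rigorous proof, because infinite-dimensional transversality requires every degenerate locus to be a Banach submanifold, including the codimension-three ones such as the synchronization set defined via $T^\pm$ and its smooth dependence on the field. The paper calls this far from obvious, and Sotomayor's results cover only the codimension-one strata, so this foundational step is still unproved in your sketch.
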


This statement is natural. Indeed, the violation of each of the genericity conditions is an additional equality-type constraint. Two equality-type constraints are already imposed on the vector field: the existence of a non-hyperbolic cycle and a non-hyperbolic singular point. Vector fields for which these two constraints and some additional one are simultaneously satisfied are subject to three equality-type constraints at once. Such fields cannot appear in generic two-parameter families.

However, a rigorous proof of this fact is not given in the present paper. It relies on an infinite-dimensional version of the transversality theorem, which requires that the degenerate fields form a Banach submanifold---a claim that is far from obvious. In the case of one-parameter families, the analogous statement constitutes an essential part of Sotomayor's paper~\cite{Sotomayor}.

\subsection{Classification of PC-HC Families}
We can now state one of the main results of the paper.

\begin{Theorem}
\label{theorem about conjugation of unpertubed fields implies weak equivalence}
Let $V$ and $W$ be two glocal PC-HC families satisfying the genericity conditions listed in Section~\ref{condition for a PC-HC family to be generic}. Suppose their unperturbed fields $v_0$ and $w_0$ are orbitally topologically equivalent and their characteristic sets are equivalent. Then the families $V$ and $W$ are weakly equivalent.
\end{Theorem}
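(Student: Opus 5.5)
We will follow the standard scheme of the glocal theory (Goncharuk, Ilyashenko, Solodovnikov): weak equivalence of two local families $V$ and $W$ is obtained by building a germ of a parameter homeomorphism $h\colon(\mathbb{R}^2,0)\to(\mathbb{R}^2,0)$ and, for every $\mu$ near $0$, an orbital topological equivalence $H_\mu$ between $v_\mu$ and $w_{h(\mu)}$ on all of $S^2$. We take $H_\mu$ close to a fixed equivalence $H_0$ between $v_0$ and $w_0$, but we do not require continuity in $\mu$. The proof reduces to two tasks. First, show that the large bifurcation support $\mathrm{LBS}(V)$ is a neighbourhood-compact invariant set outside of which all fields of the family are structurally stable. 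Second, construct equivalences near $\mathrm{LBS}(V)$ that match those outside. Outside a small neighbourhood $U$ of $\mathrm{LBS}$ the fields $v_\mu$ for small $\mu$ are Morse--Smale-like: there are only hyperbolic singular points and cycles, and no saddle connections. We can therefore take $H_\mu$ there to be a small perturbation of $H_0$, given by structural stability on compact pieces glued along transversals.

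The first main step is to describe the bifurcation diagrams for $V$ and $W$ explicitly in the normal-form coordinates supplied by the genericity conditions. We take $\lambda$ as the saddle-node unfolding parameter, so that for $\lambda$ of one sign $N$ splits into a saddle and a node and for the other sign it vanishes. We take $\varepsilon$ as the parameter of the saddle-node family $\mathcal{P}_{\varepsilon,\lambda}$, which creates two hyperbolic cycles or none. For $\lambda$ on the side where $N$ vanishes, the trajectories through the former parabolic sector wind near $\overline{\gamma}_h$, and, in the configurations with $a_1=1$, they interact with $\gamma$. Sparkling saddle connections then appear. These are separatrices of saddles crossing $\Gamma_1$ at the points $\mathfrak{a}_i,\mathfrak{b}_j$ and passing through the corridor of the vanished cycle $\gamma$ (coordinates $\mathcal{A}^\pm$). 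They appear along countable sequences of curves accumulating to the axes. Using the time charts $T^\pm$ and the asymptotics of the Dulac/transition maps near the vanished saddle-node and vanished parabolic cycle, we write these bifurcation curves as $\{\phi_{ij}(\varepsilon,\lambda)\equiv\lambda_{ij}+\text{(small)}\pmod 1\}$, where $\phi$ is the rotation-type "time of passage" function. Their mutual order is governed by the order of the differences $\Delta(\mathcal{A}^+,\mathcal{A}^-)$, by $\mathcal{L}_1,\mathcal{L}_2$ and by the liaison relation. The Malta--Palis (non-synchronization) condition guarantees that at most one connection occurs at a time, apart from isolated codimension-two points. This is what makes the curves in each "period" form the same combinatorial pattern.

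The second step, which we expect to be the main obstacle, is to construct $h$. Equivalence of the characteristic sets gives an order-preserving correspondence between the sparkling curves of $V$ and those of $W$, including the shift $\alpha$ in the definition of equivalence on the circle. We must show that the countable families of curves of $V$ and $W$ are ordered identically and accumulate in the same way onto the codimension-one curves $\{\lambda=0\}$ and $\{\text{parabolic cycle}\}$. We then build $h$ sector by sector. On each fundamental domain of the rotation-like function we map curves to curves by a homeomorphism, extending it to the complementary regions by Schoenflies-type arguments. We also check that $h$ is continuous at the accumulation axes and at $0$. This requires uniform estimates showing that the curves of a given index converge monotonically, which comes from the normal forms of the Poincaré and Dulac maps. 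Equivalence of the unperturbed fields $v_0\sim w_0$ fixes the configuration, so the same list of strata appears on both sides.

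Finally, for $\mu$ in a fixed stratum, $v_\mu$ and $w_{h(\mu)}$ have the same combinatorics near $\mathrm{LBS}$: the same connection, or none, and the same hyperbolic cycles born from $\gamma$ or $\overline{\gamma}_h$. Within a stratum, therefore, the fields are equivalent near $\mathrm{LBS}$. We construct $H_\mu$ there by mapping transversals $\Gamma_1,\Gamma_2$ to the corresponding transversals, matching the separatrix points, and extending along orbits. We then glue $H_\mu$ to the outer equivalence along the boundary transversals of $U$, using the fact that separatrices crossing $\partial U$ are matched consistently by $H_0$. This gives the required weak equivalence.
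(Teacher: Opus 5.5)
Your route differs from the paper's. The paper never builds $h$ or glues conjugacies by hand. It proves a moderate equivalence of $V$ and $W$ in neighbourhoods of their large bifurcation supports (Theorem~\ref{theorem characteristic sets implies moderate equivalence}) and then invokes the Goncharuk--Ilyashenko theorem (Theorem~\ref{main theorem about large bifurcation support}).

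The genuine gap in your version is the gluing. You deliberately drop continuity in $\mu$ and build the inner $H_\mu$ separately in each stratum, yet you take the outer $H_\mu$ to be a small perturbation of $H_0$. Nothing makes the two agree on $\partial U$, and in general they cannot. For $\varepsilon>0$, read in the time charts $T^\pm$, the two sides of the inner conjugacy across the corridor of the vanished cycle differ by the rotations through $\tau(\mu)$ and $\tilde\tau(h(\mu))$. These sweep a full period as $\mu\to0$, so the inner map cannot stay close to $H_0$ on both sides. Saying that the separatrices crossing $\partial U$ are matched by $H_0$ controls only finitely many points of $\partial U$.

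The hypotheses of the large bifurcation support theorem are exactly the missing mechanism: continuity of $\mathbf{H}$ on $S(v_0)\cup\partial(\overline{\mathrm{Per}\,V}\cup\overline{\mathrm{Sep}\,V})\cap\{\mu=0\}$, and $\mathbf{H}_0=\hat H|_{\mathcal U}$. You discard the first and never arrange the second. To bypass that theorem you would instead have to prove that the outer equivalence can be chosen with arbitrary arc-preserving boundary values on $\partial U$. That requires a carefully constructed $U$, like the one in Section~\ref{subsection about choice of neighbourhood of LBS}, not an arbitrary small neighbourhood, and the outer map is then no longer close to $H_0$.

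Separately, the bifurcation picture on which you base $h$ is wrong. When $N$ vanishes ($\lambda>0$), nothing sparkles near $\overline{\gamma}_h$: a hyperbolic attracting cycle is born from $\overline{\gamma}_h$ and all $a_i$, $b_j$ simply wind onto it. The orientation letter $a_1$ plays no role here. The only sparkling connections are between $d_j$ and $c_i$, and, when $a_2=1$ and $\lambda\le0$, between $d_j$ and $\beta_1$. They pass through the corridor of the vanished cycle $\gamma$, i.e.\ they occur for $\varepsilon>0$.

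These connections lie on curves $\varepsilon=\psi_{mij}(\lambda)$ that accumulate only on $\{\varepsilon=0\}$ and cross $\{\lambda=0\}$. The $\beta_1$-curves terminate on $\{\lambda=0\}$, which produces the three-edge vertices. Their order is governed by $\Delta(\mathcal{A}^+,\mathcal{A}^-)$ alone; $\mathcal{L}_1$, $\mathcal{L}_2$ and the liaison only affect the phase portraits inside the strata. So your Step 2 targets the wrong diagram.

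Step 2 is also the heart of the proof, and you leave it as a programme (``sector by sector'', ``uniform estimates''). The paper instead imports the lemma on marked saddle-node families (Theorem~\ref{theorem about saddle-node family of maps}). That lemma gives at once the parameter homeomorphism and a conjugacy of the Poincar\'e maps on $\Gamma_2$. This conjugacy carries $\Gamma_{K_j},\Gamma_{F_j}$ to $\Gamma_{\tilde K_j},\Gamma_{\tilde F_j}$ and is continuous at the marked points. That continuity is exactly what the moderate equivalence, and hence the globalization, requires.
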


The definition of weak equivalence is recalled below in Section~\ref{subsection with definition of equivences}.

The proof of this theorem uses the notion of large bifurcation support, whose properties were studied in~\cite{Large bifurcation support}.

From Theorem~\ref{theorem about conjugation of unpertubed fields implies weak equivalence}, taking into account the fact that closeness of PC-HC families implies orbital topological equivalence of their unperturbed vector fields (the proof of this fact will be given in Section~\ref{section about proof of structural stability}), it follows easily that

\begin{Theorem}
\label{theorem about structural stability of PC-HC families}
Generic glocal families of the PC-HC class are structurally stable.
\end{Theorem}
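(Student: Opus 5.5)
The plan is to deduce the statement from Theorem~\ref{theorem about conjugation of unpertubed fields implies weak equivalence}. Structural stability of a generic glocal family $V$ means that every family $W$ sufficiently close to $V$ (in the $C^\infty$ sense on the product of the sphere with a small parameter neighbourhood) is weakly equivalent to $V$. So, given a generic PC-HC family $V$ with unperturbed field $v_0$ and a nearby family $W$, I will check the hypotheses of that theorem for $W$.

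\textbf{Step 1: $W$ contains a PC-HC field and is generic.} The genericity conditions on the family are transversality conditions: the saddle-node surface is crossed transversally in $\lambda$, and the Poincaré map of $\gamma$ forms a saddle-node family in $\varepsilon$. These conditions are open, so by the implicit function theorem $W$ has a parameter value $\mu^*$ near $0$ at which $w_{\mu^*}$ has
\begin{itemize}
\item a saddle-node $N'$ close to $N$,
\item a parabolic cycle $\gamma'$ close to $\gamma$.
\end{itemize}
The homoclinic curve of $N'$ persists because it returns into the interior of the parabolic sector, which is an open condition. One should check that the two equations (saddle-node present, cycle parabolic) can be solved simultaneously. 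This is done by solving them in the two transverse directions $\lambda$ and $\varepsilon$. Recentering the parameter at $\mu^*$ gives a glocal PC-HC family. The remaining genericity conditions on the unperturbed field are open under small perturbations:
\begin{itemize}
\item hyperbolicity of the other singular points and cycles;
\item nondegeneracy $a\neq 0$ of the parabolic cycle;
\item absence of saddle connections.
\end{itemize}
Finiteness of singular points and cycles is not open in general. Here it follows from hyperbolicity together with the standard Peixoto-type argument that the $\omega$- and $\alpha$-limit sets of $v_0$ persist.

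\textbf{Step 2: $v_0$ and $w_{\mu^*}$ are orbitally topologically equivalent.} This is the fact deferred to Section~\ref{section about proof of structural stability}. Away from $N$ and $\gamma$, the field is Morse--Smale-like (no saddle connections), and the standard argument applies: hyperbolic objects persist, separatrices move continuously, and their $\alpha/\omega$-limits are unchanged. Near $C(v_0)$, the configuration word is preserved by continuity, since the time orientations, the limit set of $\beta_1$ and the side of repulsion of $\gamma$ are unchanged. By Lemma 1.1's argument, the germ near $C$ is then determined up to equivalence, and the local and global equivalences are glued along transversals.

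\textbf{Step 3: the characteristic sets are equivalent.} The points $\mathfrak a_i,\mathfrak b_j$ and the time-chart coordinates of $\mathfrak c_r,\mathfrak d_s$ depend continuously on the field. This requires showing that the charts $T^\pm$ from~\cite{one-parameter family proof 2} can be chosen continuously in the field, for instance via continuous dependence of the Fatou coordinate. The marking and the liaison come from which saddle each separatrix belongs to, and these are preserved under the correspondence of saddles. The finite sets on the intervals keep their order, because the points are distinct and move slightly. For the circles, the Malta--Palis condition means the differences $\lambda_{ij}$ are pairwise distinct mod 1. Two equal differences would give $\#((\mathcal A^++\alpha)\cap\mathcal A^-)\ge2$, at least when $i\ne i'$ and $j\ne j'$; the remaining cases are trivially distinct. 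A small perturbation therefore preserves their cyclic order, with $\alpha=0$.

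Theorem~\ref{theorem about conjugation of unpertubed fields implies weak equivalence} then gives weak equivalence of $V$ and $W$. I expect the main obstacle to be Step 2 together with the continuity of $T^\pm$ in Step 3. The global equivalence of unperturbed fields requires care with the sparkling separatrices winding onto $\gamma$. The cleanest route is to build the homeomorphism separately on the annulus and the disk bounded by $\gamma$, using the orbit-space circles $S^1_\pm$, and to match them along $\gamma$.
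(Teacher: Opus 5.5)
Your proposal is correct and follows the paper's own route. The paper also obtains Theorem~\ref{theorem about structural stability of PC-HC families} from Theorem~\ref{theorem about conjugation of unpertubed fields implies weak equivalence} (that is, Theorem~\ref{theorem characteristic sets implies moderate equivalence} combined with Theorem~\ref{main theorem about large bifurcation support}) via Lemma~\ref{lemma about structural stability of vector fields}, whose proof is essentially your Step~2: structural stability off a neighbourhood of $\overline{\gamma}_h\cup\gamma$, then extending the equivalence along trajectory arcs near $\gamma$ and $\overline{\gamma}_h$. Your Steps~1 and~3 are not a deviation but make explicit points the paper only asserts: recentring the nearby family at the new degenerate parameter $\mu^*$, and deriving equivalence of the characteristic sets from continuity of the time charts and openness of the Malta--Palis condition.
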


\begin{Theorem}
\label{theorem about equivalence of bifurcation diagrams}
Let two generic PC-HC families have the same configuration and equivalent characteristic sets. Then their bifurcation diagrams are homeomorphic and their bifurcation scenarios coincide.
\end{Theorem}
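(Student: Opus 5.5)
The plan is to derive this statement from Theorem~\ref{theorem about conjugation of unpertubed fields implies weak equivalence}, together with the explicit description of bifurcations of generic PC-HC families given later in Section~\ref{chapter about bifurcation diagrams}. The first step is to reduce to the unperturbed fields. Two generic PC-HC fields with the same configuration are orbitally topologically equivalent in a neighbourhood of $C(v)$; this is the content of the lemma on the six configurations. Equivalent characteristic sets fix, up to a homeomorphism of the interval and a common rotation of the circles, the positions where the saddle separatrices meet the transversals $\Gamma_1$ and $\Gamma_2$, and they also fix the liaison between them. Since the bifurcations of a generic family are determined by its large bifurcation support, we may restrict both families to neighbourhoods of their large bifurcation supports. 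These supports are made up of $C(v)$ together with the separatrices recorded in $\mathcal{L}_1$, $\mathcal{L}_2$, $\mathcal{A}^\pm$. Their neighbourhoods are therefore orbitally equivalent for the two unperturbed fields, and this is exactly the input needed to run the argument of Theorem~\ref{theorem about conjugation of unpertubed fields implies weak equivalence} locally, in the moderate-equivalence version announced in the abstract. That argument yields a homeomorphism of parameter bases $h\colon(\mathbb{R}^2,0)\to(\mathbb{R}^2,0)$ such that $v_\mu$ and $w_{h(\mu)}$ are orbitally equivalent near the supports.

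The second step is to show that $h$ maps the bifurcation diagram of the first family onto that of the second, preserving strata. The bifurcation set is the set of parameters where the field restricted to a neighbourhood of the large bifurcation support fails to be structurally stable. Equivalence of fields preserves the degeneracy type: saddle-node point, parabolic cycle, and saddle connections, including the sparkling ones. Hence $h$ carries each stratum to the stratum of the same type. This already gives a homeomorphism of the diagrams and the coincidence of the bifurcation scenarios, since a scenario is read off along the strata.

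To make the argument concrete and less dependent on the general theorem, I would also check it directly against the normal forms. By the genericity conditions, $\lambda=0$ is the saddle-node curve and $\varepsilon=0$ along $\lambda=0$ is the parabolic-cycle curve. The sparkling saddle connection curves accumulate at the origin, and they are indexed by the pairs from $\mathcal{L}_2\times\mathcal{A}^+$ (and the corresponding pairs for $\mathcal{L}_1$), with the integer number of turns giving the index. Using the time charts $T^\pm$, the connection condition takes the form $T^+(\mathfrak c_r)-F(\varepsilon,\lambda)\in\mathbb{Z}$ for a monotone function $F$ coming from the Dulac passage near $N$. The cyclic order of these curves is then governed by the order of $\Delta(\mathcal{A}^+,\mathcal{A}^-)$ and of $\mathcal{L}_{1,2}$. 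Equivalent characteristic sets give the same combinatorics for the two families, so one can build $h$ sector by sector, first matching the curves and then extending to the complementary open regions.

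The main obstacle is the countable family of sparkling connection curves accumulating at the origin, since the homeomorphism must match infinitely many curves while remaining continuous at $0$. My first approach would be to show that, in suitable coordinates, the curves for consecutive winding numbers $n$ are ordered identically in both families, with order type determined by the marked sets. Then $h$ can be defined on each fundamental strip between the $n$-th and $(n+1)$-th curves, and one checks that these strips shrink to the origin uniformly. Continuity at $0$ then follows.
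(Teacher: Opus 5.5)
Your first two paragraphs are essentially the paper's proof. The paper applies Theorem~\ref{theorem characteristic sets implies moderate equivalence}: same configuration and equivalent characteristic sets give a moderate equivalence $\mathbf{H}=(h,\mathbf{H}_\mu)$ in neighbourhoods of the large bifurcation supports. That is the localized version of Theorem~\ref{theorem about conjugation of unpertubed fields implies weak equivalence} you reconstruct. The paper then observes, as you do, that each $\mathbf{H}_\mu$ is an orbital equivalence and so carries saddle-nodes, parabolic cycles and saddle connections to the same degeneracies. Hence $h$ is a homeomorphism between the bifurcation diagrams.

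You can cite that theorem directly. The detour through orbital equivalence of the unperturbed fields near the supports is superfluous, and that equivalence is not by itself the needed input. What the construction near $\gamma$ uses is the cyclic order of $\Delta(\mathcal{A}^+,\mathcal{A}^-)$ up to a shift. That order is all that equivalence of the circle sets fixes (not the positions up to a common rotation), and topological equivalence of $v_0$ and $w_0$ does not detect it.

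You should drop the direct check in your last two paragraphs, because its specifics are wrong.
\begin{itemize}
\item The sparkling connections are between the separatrices $c_i$ and the separatrices $d_j$, with $\beta_1$ joining the $c_i$ when $a_2=1$, and then only for $\lambda\le 0$. So they are indexed by $\mathcal{A}^+\times\mathcal{A}^-$, not by $\mathcal{L}_2\times\mathcal{A}^+$ or pairs involving $\mathcal{L}_1$.
\item The sets $\mathcal{L}_1,\mathcal{L}_2$ index no connection curves at all: for $\lambda>0$ the separatrices $a_i,b_j$ simply wind onto the hyperbolic cycle born from $\overline{\gamma}_h$.
\item The connection equation is $T^+(\mathfrak{c}_i)-T^-(\mathfrak{d}_j)=\tau(\varepsilon,\lambda)+m$ with $\tau(\varepsilon,\lambda)=T^-_{\varepsilon,\lambda}(b^+)$. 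The relevant passage is through the neighbourhood of the vanished parabolic cycle $\gamma$, not a Dulac passage near $N$.
\item The order of the curves is governed by $\Delta(\mathcal{A}^+,\mathcal{A}^-)$ alone.
\item The curves $\varepsilon=\psi_{mij}(\lambda)$ accumulate on the whole germ of the axis $\{\varepsilon=0\}$, not just at the origin. A strip-by-strip $h$ would therefore have to be checked for continuity at every point $(0,\lambda_0)$, not only at $0$.
\end{itemize}
In the paper this matching of infinitely many curves is not a separate obstacle. It is absorbed into the Lemma on marked saddle-node families (Theorem~\ref{theorem about saddle-node family of maps}), whose parameter homeomorphism $h_{\mathcal{H}}$ defines $h(\varepsilon,\lambda)=(h_{\mathcal{H}}(\varepsilon),\lambda)$ inside the proof of Theorem~\ref{theorem characteristic sets implies moderate equivalence}.
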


Thus, in a certain sense, PC-HC families are completely determined by their configurations and characteristic sets.

We now proceed to the proofs of these theorems. We will need the definitions of various types of equivalence of families and the definition of the large bifurcation support.

\section{Basic Definitions}

\subsection{Weak and Moderate Equivalences}
\label{subsection with definition of equivences}

We recall several definitions.

\defi Two vector fields $v$ and $w$ are called {\itshape orbitally topologically equivalent} if there exists a homeomorphism of their phase spaces sending the phase curves of $v$ to the phase curves of $w$ while preserving the time orientation.

\defi We say that a vector field $v \in Vect^{\infty}(S^2)$ satisfies the {\itshape \L ojasiewicz inequality} if for every singular point $a$ there exists a neighborhood in which $\|v(x)\| \ge c \|a -x\|^k$ holds for some natural number $k$ and constant $c>0$.
Denote by $Vect^*(S^2)$ the set of all vector fields with finitely many limit cycles and singular points, in a small neighborhood of each of which the \L ojasiewicz inequality holds.

\defi A smooth $k$-parameter {\itshape family $V$ of vector fields} with base $B$ on the two-dimensional sphere is a smooth vector field on the space $S^2 \times B$, where $B$ is an open ball centered at the origin in $\mathbb{R}^k$, tangent to each fiber $S^2 \times \{\mu\}$. Each individual vector field of the family will be denoted by $v_{\mu}$ and viewed as the restriction of $V$ to the fiber $S^2 \times \{\mu\}$. A {\itshape glocal family} (global in the phase variable and local in the parameter) {\itshape of vector fields} is the germ
of a family of vector fields, regarded as a field on $S^2 \times B$, on the set $S^2 \times \{0\}$. In other words, we introduce an equivalence relation on the space of families of vector fields on $S^2$: two families $v_{\mu}$, $\mu \in B$ and $v_{\delta}$, $\delta \in B'$ are considered equivalent (they define the same germ) if there exists a neighborhood of the origin $U \subset B \cap B'$ such that $v_{\mu} \equiv w_{\mu}$ for all $\mu \in U$; the equivalence class under this relation is what we call a {\itshape glocal family}.

\defi \label{weak equivalence definition} Two \hfill families \hfill of \hfill vector \hfill fields \hfill $V=\{v_{\mu} \mid \mu \in B\}$ \hfill and \\ $W=\{w_{\delta} \mid \delta \in B'\}$ with bases $B$ and $B'$---neighborhoods of the origin in $\mathbb{R}^n$---defined on the two-dimensional sphere, are called {\itshape weakly equivalent} if there exists a mapping
\begin{equation}
    H \colon B \times S^2 \to B' \times S^2, \quad H(\mu, x) = (h(\mu), H_{\mu}(x))
    \label{weak equivalence definition map}
\end{equation}
such that $h$ is a homeomorphism of parameter bases sending 0 to 0, and for each $\mu \in B$ the mapping $H_{\mu} \colon S^2 \to S^2$ provides orbital topological equivalence of the fields $v_{\mu}$ and $w_{h(\mu)}$.

\defi A family $V$ of vector fields on the two-dimensional sphere is {\itshape structurally stable} if there exists a neighborhood $\mathcal{V}$ of the family $V$ in the natural topology such that any family $W$ in the neighborhood $\mathcal{V}$ is weakly equivalent to $V$.

Weak equivalence does not control the continuity of the conjugacy in the parameter at bifurcation points. To work with large bifurcation supports, we need a stronger notion---{\itshape moderate equivalence}, which requires continuity on sets that are essential for bifurcations (the separatrix skeleton and closures of cycles and separatrices).

\defi Two \hfill families \hfill of \hfill vector \hfill fields \hfill $V=\{v_{\mu} \mid \mu \in B\}$ \hfill and \\ $W=\{w_{\delta} \mid \delta \in B'\}$ with bases $B$ and $B'$---neighborhoods of the origin in $\mathbb{R}^n$---defined on the two-dimensional sphere, are called {\itshape moderately equivalent} if the conditions of Definition~\ref{weak equivalence definition} hold, and moreover the mapping~(\ref{weak equivalence definition map}) is continuous in $\mu$ and $x$ on the set
\begin{equation}
    S(v_0) \cup \partial (\overline{PerV} \cup \overline{SepV}) \cap \{\mu = 0\}.
    \label{set from definition of moderate equivalence}
\end{equation}
Furthermore, the inverse of~(\ref{weak equivalence definition map}), the mapping $H^{-1}$, is continuous on the set
\begin{equation}
    S(w_0) \cup \partial (\overline{PerW} \cup \overline{SepW}) \cap \{\delta = 0\},
    \label{the reverse set from definition of moderate equivalence}
\end{equation}
where $S(v)$ denotes the separatrix skeleton, and $PerV$ and $SepV$ denote the unions of all cycles and separatrices of the vector fields of the family $V$, respectively.

\defi \label{moderate equivalence near closed subsets definition} Two families of vector fields $V=\{v_\mu\}$ and $W=\{w_\delta\}$ on the two-dimensional sphere with parameter bases $B$ and $\tilde B$---fixed neighborhoods of the origin in $\mathbb{R}^k$---are called {\itshape moderately equivalent in neighborhoods of closed subsets $Z_1$ and $Z_2$} if
\begin{itemize}
	\item the set $Z_1$ is invariant under the field $v_0$, and $Z_2$ is invariant under the field $w_0$;
	\item there exists a neighborhood $\mathcal{U}$ of the set $Z_1$ and a mapping
	$$
	\mathbf{H} \colon B \times \mathcal{U} \to \tilde B \times S^2, \quad (\mu, x) \mapsto (h(\mu), \mathbf{H}_\mu(x)),
	$$
	which consists of a homeomorphism $h$ with $h(0)=0$, and a collection of mappings $\mathbf{H}_\mu$ such that for each $\mu \in B$, $\mathbf{H}_\mu$ is an orbital topological equivalence of the fields $\left. v_\mu \right|_{\mathcal{U}}$ and $\left. w_{h(\mu)} \right|_{\mathbf{H}_\mu(\mathcal{U})}$;
	\item $\mathbf{H}_0(Z_1)=Z_2$;
	\item the image of any neighborhood $\mathcal{V}$ of the set $\{ \mu=0\} \times Z_1$ under the mapping $\mathbf{H}$ contains some neighborhood of the set $\{\delta = 0\} \times Z_2$; the same holds for $\mathbf{H}^{-1}$;
	\item the mapping $\mathbf{H}$ is continuous on the intersection of its domain with the set~(\ref{set from definition of moderate equivalence}), and $\mathbf{H}^{-1}$ is continuous on the intersection of its domain with the set~(\ref{the reverse set from definition of moderate equivalence}).
\end{itemize}

\subsection{Large Bifurcation Supports: Definition}
\label{subsection definition of large bifurcation support}

\defi (\cite{Large bifurcation support}) The {\itshape extended large bifurcation support ELBS($v_0$)} of a vector field $v_0$ on the sphere is the union of all non-hyperbolic singular points and non-hyperbolic limit cycles of $v_0$, together with the closure of the set of all regular points whose $\alpha$- and $\omega$-limit sets are interesting.

\vspace{-5pt}
In the case of a field $v_0$ of the PC-HC class, the {\itshape interesting sets} are all saddles, the saddle-node $N$, and the parabolic limit cycle $\gamma$, provided that at least one separatrix winds onto it from both the outside and the inside of the cycle. The general definition of interesting sets is given in~\cite{Large bifurcation support}.

\vspace{-5pt}
In the notation introduced below, for a field $v_0$ of the PC-HC class, we introduce several additional notations for elements of the phase portrait of the field $v_0$. Consider all saddles of this field whose separatrices tend to the saddle-node $N$ or the limit cycle $\gamma$. Denote those lying in the disk bounded by the curve $\overline{\gamma}_h$ and not containing the cycle $\gamma$ by $A=\{A_i\}$, and their separatrices tending to the saddle-node $N$ by $\{a_i\}$ (choose the index $i$ to enumerate exactly the separatrices of the saddles: if both unstable separatrices $a_j$ and $a_k$ of some saddle tend to the saddle-node $N$, then this saddle has two different labels $j$ and $k$). Denote the saddles whose separatrices tend to the saddle-node $N$ lying on the opposite side of the curve $\overline{\gamma}_h$ by $B=\{B_j\}$, and their separatrices tending to the singular point $N$ by $\{b_j\}$. Denote the saddles whose separatrices tend to the limit cycle $\gamma$ from the outside and from the inside by $C=\{C_r\}$ and $D=\{D_s\}$ respectively, and their separatrices by $\{c_r\}$ and $\{d_s\}$ in the same manner. Denote by $G$ the union of trajectories of the field $v_0$ that wind off the limit cycle $\gamma$ and tend to the saddle-node $N$ in positive time, with the exception of the separatrix $\beta_1$:
$$G \coloneqq \{ x \in S^2 \mid \alpha(x) = \gamma, \: \omega(x) = N \} \setminus \beta_1.$$

Assume $\# \{a_i\} = l$, $\# \{b_j\} = m$, $\# \{c_r\} = k$, $\# \{d_r\} = n$.

By definition, the set ELBS($v_0$) contains the curves $\overline{\gamma}_h$ and $\gamma$, all trajectories of the set $G$, all separatrices tending to the saddle-node $N$ or the cycle $\gamma$ in forward and backward time, together with their saddles. The separatrix $\beta_1$ of the saddle-node $N$ belongs to ELBS($v_0$) if it winds off the cycle $\gamma$, and does not belong to it otherwise.

In the notation introduced above, the set ELBS($v_0$) takes the form:
\begin{equation}
ELBS(v_0) = \overline{\gamma}_h \cup \gamma \cup \beta_1 \cup G \cup \bigcup \limits_1^l (A_i \cup a_i) \cup \bigcup \limits_1^m (B_j \cup b_j) \cup \bigcup \limits_1^k (C_r \cup c_r) \cup \bigcup \limits_1^n (D_s \cup d_s),
\end{equation}
where the separatrix $\beta_1$ may or may not belong to ELBS($v_0$) depending on the configuration of the field $v_0$.

\defi The {\itshape large bifurcation support} $LBS(V)$ of a local family $V = \{v_\mu\}$ is defined as
\begin{equation}
    LBS(V) = ELBS(v_0) \cap (Sing\: v_0 \cup ( \overline{Per\:V} \cup \overline{Sep \: V}) \cap \{\mu = 0\}),
\end{equation}
where $Sing\: v_0$ denotes the union of all singular points of the unperturbed field $v_0$.

\prop \label{Set G in SepV} For a generic family $V$ of the PC-HC class, the set of trajectories $G$ belongs to the set $\overline{Sep \: V}$.

\vspace{-5pt}
This statement follows from the fact that every trajectory in $G$ is a limit of saddle connections arising under perturbation of the parameter. A formal proof of this proposition will be given in Section~\ref{chapter about bifurcation diagrams}.

Using \hfill this \hfill statement, \hfill we \hfill can \hfill see \hfill that \hfill the \hfill set \\ $(\overline{Per \: V} \cup \overline{Sep \: V}) \cap \{\mu=0\}$ coincides with ELBS($v_0$). Thus, for PC-HC families $V$, the large bifurcation support coincides with the extended large bifurcation support:
$$
LBS(V) = ELBS(v_0).
$$

\subsection{Theorem on Large Bifurcation Supports}

The main property of large bifurcation supports is formulated in the following theorem.

\begin{Theorem}
    \label{main theorem about large bifurcation support} (\cite{Large bifurcation support}) Let vector fields $v_0$ and $w_0$ be orbitally topologically equivalent on the sphere $S^2$. Denote the corresponding homeomorphism by $\hat H$. Let $V$ and $W$ be local families from $Vect^*(S^2)$ that unfold the fields $v_0$ and $w_0$. Suppose there exists a moderate equivalence mapping $\mathbf{H}$ of the families $V$ and $W$ in neighborhoods of their large bifurcation supports in the sense of Definition~\ref{moderate equivalence near closed subsets definition}, and moreover $\left. \hat H \right|_{\mathcal{U}} = \mathbf{H}_0$, where $\mathcal{U}$ is a neighborhood of the large bifurcation support LBS($V$).
\vspace{-5pt}

Then the families $V$ and $W$ are weakly equivalent on the entire sphere $S^2$.
\end{Theorem}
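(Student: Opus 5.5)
The plan is to reduce the statement to the classical description of planar phase portraits by their schemes (Leontovich--Andronov--Maier). That result says two fields in $Vect^*(S^2)$ are orbitally topologically equivalent as soon as there is a homeomorphism of their separatrix skeletons, enriched by the canonical regions and their boundary behaviour, that respects orientation and time. So for every small $\mu$ it is enough to build such a skeleton equivalence between $v_\mu$ and $w_{h(\mu)}$ extending $\mathbf{H}_\mu|_{\mathcal{U}'}$, for a smaller neighborhood $\mathcal{U}'\Subset\mathcal{U}$ of LBS($V$). The theorem then follows by completing $\mathbf{H}_\mu$ cell by cell. Continuity in $\mu$ is not required, since we only want weak equivalence. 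The parameter homeomorphism $h$ is taken unchanged from $\mathbf{H}$.

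First I would analyse the complement $K=S^2\setminus\mathcal{U}'$ for the unperturbed field. By the definition of ELBS, every regular point of $v_0$ outside LBS has an $\alpha$- or $\omega$-limit set that is not interesting, i.e. a hyperbolic source, sink, or hyperbolic cycle. All non-hyperbolic objects and all saddles whose separatrices meet the degeneracy lie inside LBS. Hence in $K$ there are only hyperbolic singular points, hyperbolic cycles, and separatrix pieces that connect $\partial\mathcal{U}'$ with hyperbolic attractors or repellers, or with saddles outside LBS. None of these form connections, because any saddle connection would make its saddles interesting and put them into ELBS.

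Next, using hyperbolicity and compactness, I would choose absorbing neighborhoods of the hyperbolic attractors (resp. repellers) and fundamental domains between them. Then for all sufficiently small $\mu$:
\begin{itemize}
\item each trajectory entering $K$ through $\partial\mathcal{U}'$ reaches such an absorbing neighborhood in uniformly bounded time;
\item the same holds for $w_{h(\mu)}$;
\item the hyperbolic elements of $v_0$ in $K$ continue to $v_\mu$ without change of type.
\end{itemize}
The same structure persists for $W$, and $\hat H$ matches the hyperbolic elements of $v_0$ and $w_0$. On $K$ I would therefore define the equivalence by continuation: hyperbolic points and cycles go to their continuations. Each separatrix of $v_\mu$ that exits $\mathcal{U}'$ is sent to the separatrix of $w_{h(\mu)}$ leaving $\mathbf{H}_\mu(\mathcal{U}')$ at the corresponding point. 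That correspondence is well defined because $\mathbf{H}_\mu$ is an orbital equivalence on $\mathcal{U}$.

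The main obstacle is checking that this glued object really is an equivalence of schemes. Concretely, a separatrix of $v_\mu$ that leaves $\mathcal{U}'$ must land on the same hyperbolic attractor, and in the same order among the separatrices attached to it, as its image under $\mathbf{H}_\mu$ does for $w_{h(\mu)}$. Separatrices that re-enter $\mathcal{U}'$ must also be tracked consistently, with no new connection created in $K$. Here I would use the fourth condition of moderate equivalence near closed subsets: images of neighborhoods of $\{\mu=0\}\times$LBS contain neighborhoods of $\{\delta=0\}\times$LBS($W$). I would combine it with continuity of $\mathbf{H}$ on $\overline{Sep V}\cup\overline{Per V}$ at $\mu=0$ and with $\mathbf{H}_0=\hat H|_{\mathcal{U}}$. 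Together these show that exit points of separatrices from $\mathcal{U}'$ converge, as $\mu\to0$, to exit points of $v_0$-trajectories, whose fates are matched by $\hat H$.

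I would first try to prove this by contradiction: take a sequence $\mu_n\to0$ with mismatched fates and pass to the limit $\mu=0$, where $\hat H$ gives a contradiction. The bounded-time transit through $K$ and the openness of the basins of the hyperbolic attractors justify taking the limit. Once the skeletons are matched, the Leontovich--Andronov--Maier theorem extends the map over each canonical region, giving $H_\mu$ on all of $S^2$ that agrees with $\mathbf{H}_\mu$ near LBS.
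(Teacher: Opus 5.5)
The paper does not prove this theorem. It is quoted from the work on large bifurcation supports and used as a black box, so there is no in-paper proof to compare with. Your architecture is the natural one: keep $h$, classify $v_\mu$ and $w_{h(\mu)}$ by their Leontovich--Andronov--Maier schemes, use $\mathbf{H}_\mu$ on $U'$ and continuation of hyperbolic objects on $K=S^2\setminus U'$, and control the interface by a limit at $\mu=0$. However, the step you single out as the main obstacle rests on two false claims, and the idea that actually closes the limit argument is missing.

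\textbf{The two false claims.} First, not every regular point of $v_0$ outside $\mathrm{LBS}(V)$ has a non-interesting $\alpha$- or $\omega$-limit set. That holds only outside $\mathrm{ELBS}(v_0)$. In general $\mathrm{LBS}(V)=\mathrm{ELBS}(v_0)\cap(\mathrm{Sing}\,v_0\cup(\overline{\mathrm{Per}\,V}\cup\overline{\mathrm{Sep}\,V})\cap\{\mu=0\})$ can be strictly smaller; here the two coincide only because of the proposition $G\subset\overline{\mathrm{Sep}\,V}$. So $K$ may contain whole orbits running from interesting sets to interesting sets. Second, there is no uniformly bounded transit time through $K$. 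Orbits entering $K$ can pass arbitrarily close to stable separatrices of saddles in $K$. They can also come from a hyperbolic source, graze $U'$, and then end at a saddle of $K$ or return into $U'$. Even with the first claim corrected, knowing that \emph{one} of $\alpha(x),\omega(x)$ is non-interesting does not tell you \emph{which}. If $\omega(x)$ is a saddle of $K$ or lies in $U'$, the picture at $\mu=0$ does not determine the fate of the nearby perturbed separatrices, and your contradiction does not close.

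\textbf{The missing step.} Let $\mu_n\to0$, and let $x_n\in\partial U'$ be exit points of unstable separatrices $\sigma_n$ of singular points of $v_{\mu_n}$ in $U'$, with $x_n\to x$; stable separatrices are symmetric. Then:
\begin{itemize}
\item $(0,x)\in\overline{\mathrm{Sep}\,V}$ and $x\notin\mathrm{LBS}(V)$, hence $x\notin\mathrm{ELBS}(v_0)$. This is exactly where the definition of $\mathrm{LBS}$ through $\overline{\mathrm{Sep}\,V}$ is used.
\item $\alpha_{v_0}(x)$ is interesting. Otherwise it is a hyperbolic source or repelling hyperbolic cycle $r$. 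The backward $v_0$-orbit of $x$ enters a backward-absorbing neighborhood of $r$ in finite time, so for large $n$ the backward $v_{\mu_n}$-orbit of $x_n$ does too. That forces $\alpha(\sigma_n)=r_{\mu_n}$, which is absurd.
\item Hence $\omega_{v_0}(x)$ is a hyperbolic sink or attracting cycle, reached in finite time along this particular orbit.
\item Continuity of $\mathbf{H}$ at $(0,x)$ (note $x\in U$) gives $\mathbf{H}_{\mu_n}(x_n)\to\hat H(x)$, and the same reasoning applies to $w_0$ at $\hat H(x)$.
\end{itemize}

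\textbf{What else this dichotomy supplies.} Applied to $\overline{\mathrm{Per}\,V}$ and $\overline{\mathrm{Sep}\,V}$, it rules out limit cycles and separatrix connections of $v_\mu$ meeting $K$ other than continuations; your sketch asserts this without proof. Run with $\mathbf{H}^{-1}$ and the fourth condition, it gives surjectivity of the matching on the $W$ side. Finally, the cyclic order at an attractor is lost in the limit when several exit points converge to the same $x$. To recover it, use that by continuity at $(0,x)$ the map $\mathbf{H}_{\mu_n}$ sends a small flow box at $x$ into a small flow box at $\hat H(x)$, preserving orientation and time, and hence the left--right order of orbit arcs.
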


\section{Structural Stability}
\label{section about proof of structural stability}

\begin{Theorem}
    \label{theorem characteristic sets implies moderate equivalence} Let $V$ and $\tilde V$ be two generic PC-HC families with the same configuration and equivalent characteristic sets. Then these families are moderately equivalent in some neighborhoods of their large bifurcation supports.
\end{Theorem}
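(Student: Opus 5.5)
We will construct the moderate equivalence $\mathbf{H}$ explicitly, piece by piece, on a neighborhood $\mathcal{U}$ of $LBS(V)=ELBS(v_0)$. The construction has three stages. First we construct the parameter homeomorphism $h$. Next we build conjugacies $\mathbf{H}_\mu$ on small neighborhoods of the non-hyperbolic objects $N$ and $\gamma$, using normal forms. Finally we extend these conjugacies along the remaining trajectories of $LBS(V)$ by flow-box arguments.

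\textbf{Parameters and local pieces.} By the genericity conditions, after a smooth reparametrization we may take $\lambda$ to unfold the saddle-node, $\dot x = x^2+\lambda+\dots$ on the center manifold, and $\varepsilon$ to unfold the parabolic cycle, $\mathcal{P}_{\varepsilon,\lambda}(x)=x+\varepsilon+ax^2+\dots$. In these coordinates the bifurcation set consists of:
\begin{itemize}
\item the lines $\{\lambda=0\}$ and $\{\varepsilon=0\}$ (locally);
\item the curves of saddle connections, along which a separatrix $a_i$ or $b_j$ hits the stable separatrices $\beta_1$ or $\beta_2$, or hits $\gamma_h$ in the region $\lambda<0$;
\item the countably many \emph{sparkling} curves in the region $\varepsilon>0$ (no cycle), on which some $c_r$ coincides with some $d_s$ after passing the vanished cycle, or $\beta_1$ (or a $b_j$ linked to a $c_r$) reaches a separatrix through the former cycle.
\end{itemize}

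We first describe all these curves asymptotically. For the cycle we use the time charts $T^\pm$ from the unperturbed map together with the standard fact, from the analysis of the saddle-node family of diffeomorphisms in \cite{one-parameter family proof 2}, that the transit map through the vanished cycle in these charts equals a shift by a quantity $\tau(\varepsilon,\lambda)\to\infty$, monotone in $\varepsilon$, up to $o(1)$. The sparkling curves are then of the form $\tau(\varepsilon,\lambda)\equiv T^+(\mathfrak{c}_r)-T^-(\mathfrak{d}_s)\pmod 1$. For the saddle-node, analogous transit maps in the chart $\zeta$ on $\Gamma_1$ give curves determined by the order of points in $\mathcal{L}_1$ and $\mathcal{L}_2$.

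Equivalence of the characteristic sets means the following. The cyclic order of $\Delta(\mathcal{A}^+,\mathcal{A}^-)$ (up to a rotation $\alpha$) and the order of $\mathcal{L}_1$, $\mathcal{L}_2$ coincide for $V$ and $\tilde V$. The liaison relation matches the combined events, where a separatrix $b_j$ passes near $N$ and then its partner $c_r$ passes through the vanished cycle. The Malta--Palis condition guarantees that distinct sparkling curves do not coincide, and the $o(1)$ error terms are too small to change their order. Hence the two bifurcation diagrams have the same combinatorics, and we build $h$ by matching each curve to its counterpart: a homeomorphism that is monotone in $\varepsilon$ on each line $\lambda=\mathrm{const}$ and fixes the boundary lines.

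Given $h$, we define $\mathbf{H}_\mu$ near $N$ as a fiberwise conjugacy of the topological normal forms of the saddle-node family; this conjugacy is continuous in $\mu$. Near $\gamma$ we define $\mathbf{H}_\mu$ through the Poincaré maps. We conjugate them on fundamental domains $[b^\pm,\mathcal{P}(b^\pm))$, chosen to send each point $\mathfrak{c}_r$, $\mathfrak{d}_s$ to its counterpart, and then propagate the conjugacy by iteration. For $\varepsilon>0$ the conjugacy must also respect the transit shift $\tau$ modulo 1. The parameter map $h$ was constructed precisely so that the sparkling connections correspond, which makes this possible.

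\textbf{Global extension and continuity.} Along the compact pieces of $\gamma_h$, $\beta_1$, the sets $G$, and the saddles $A_i,B_j,C_r,D_s$ with their separatrices, we extend $\mathbf{H}_\mu$ by the standard flow-box and hyperbolic-saddle constructions, which depend continuously on $\mu$. The main obstacle is the continuity required by the definition of moderate equivalence at $\mu=0$ on $\overline{Sep\,V}$, in particular on the set $G$. By Proposition~\ref{Set G in SepV}, $G$ is accumulated by sparkling connections, so the conjugacies near $\gamma$ for $\varepsilon>0$ (built on a rotating fundamental domain) must converge to $\mathbf{H}_0$ on $G$ as $\varepsilon\to0$. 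To handle this, we would define the conjugacy in the $T^\pm$ charts as the identity plus a correction supported near the marked points, with this correction converging uniformly. The same construction also yields the remaining condition of the definition, that images of neighborhoods of $\{\mu=0\}\times Z_1$ contain neighborhoods of $\{\delta=0\}\times Z_2$.
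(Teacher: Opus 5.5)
Your overall architecture (local conjugacies near $N$ and $\gamma$, then extension by flow boxes and relative distance along the rest of $LBS(V)$) is the paper's. Building $h$ directly by matching the sparkling curves is a reasonable substitute for the parameter part. The gap is the phase-space step near $\gamma$, which carries the real content of the theorem.

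The paper does not build this step by hand. It applies the Lemma on marked saddle-node families (Theorem~\ref{theorem about saddle-node family of maps}) to the Poincar\'e maps on $\Gamma_2$, with marked points $K_j$ (last intersections of the $c_i$ and of $\beta_1$) and $F_j$ (first intersections of the $d_j$). This yields conjugacies sending marked points to marked points, continuous on the marked sets and at the origin. The relative-distance extension then propagates that skeleton continuity.

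You replace this by the assertion that $h$ ``makes this possible'', together with a conjugacy equal to the identity in the $T^\pm$ charts plus a correction near the marked points, converging uniformly on $G$. That target is unattainable in general.
\begin{itemize}
\item For $\varepsilon>0$ the orbit space is a single circle. In the $T^+$ chart the point of $d_s$ sits at $T^-(\mathfrak d_s)-\tau(\mu)\pmod{1}$, and this sweeps around the circle infinitely often as $\varepsilon\to0^+$. A correction supported near these points therefore passes over every point of $G$, so it converges there only if it tends to zero. That would force $\tilde{\mathcal A}^-$ to be a rigid rotation of $\mathcal A^-$, not merely order-equivalent to it.
\item A concrete case: with $a_2=1$ and $k=0$, the set $G$ fills the outer fundamental domain. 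Averaging over the phase then gives $\tilde T^-(\tilde{\mathfrak d}_2)-\tilde T^-(\tilde{\mathfrak d}_1)=T^-(\mathfrak d_2)-T^-(\mathfrak d_1)$, which equivalence of characteristic sets does not provide.
\item Being the identity in both charts forces $\tilde\tau(h(\mu))\equiv\tau(\mu)\pmod{1}$. This cannot send the sparkling curves $\{\tau\equiv\lambda_{ij}\}$ of $V$ to the curves $\{\tilde\tau\equiv\tau_{ij}\}$ of $\tilde V$ unless the differences agree up to a common shift.
\end{itemize}
What you actually need, and what the lemma supplies, is continuity only at the marked orbits and at $\gamma$. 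It comes from a construction in which the Malta--Palis condition guarantees that at most one pair of marked points is close at any parameter value. That construction, or a citation of the lemma, is missing from your proposal.

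Two smaller errors. First, there are no saddle-connection curves in which $a_i$ or $b_j$ meets $\beta_1$, $\beta_2$ or $\gamma_h$. These separatrices cross $\Gamma_1$ strictly between the stable separatrices of $N$. For $\lambda<0$ they fall into the node, and for $\lambda>0$ they wind onto the newborn hyperbolic cycle. An unstable separatrix cannot meet the unstable separatrix $\gamma_h$ at all. The saddle-node contributes only the line $\lambda=0$ (where, for $a_2=1$, the sparkling curves of $\beta_1$ end), which is why the paper can leave $\lambda$ unchanged in $h$. Accordingly, $\mathcal L_1$ and $\mathcal L_2$ do not determine any bifurcation curves. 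They, and the liaison relation (which identifies saddles $B_j=C_r$ rather than encoding ``combined events''), enter only through the topology of $\mathbf H$ along $\Gamma_1$ and the gluing of the pieces $U_{b_i}\cup U_{c_j}$ that cover $G$. Second, the condition on images of neighborhoods in Definition~\ref{moderate equivalence near closed subsets definition} is asserted, not checked.
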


To prove Theorem~\ref{theorem characteristic sets implies moderate equivalence}, we construct a neighborhood $U$ of the large bifurcation support LBS($V$) and explicitly define the moderate equivalence mapping $\mathbf{H}$. The construction consists of two steps: choosing a suitable neighborhood $U$ covering LBS($V$) in a special way (described in Section~\ref{subsection about choice of neighbourhood of LBS}), and defining the homeomorphism $\mathbf{H}_{\mu}$ on each component of the domain $U$ using normal forms and preservation of relative distance (carried out in Section~\ref{subsection about moderate equivalence in neighbourhoods of LBS}).

\subsection{Neighborhood of the Large Bifurcation Support}
\label{subsection about choice of neighbourhood of LBS}

Denote the desired neighborhood of the large bifurcation support LBS($V$) by $U$. We construct it in parts.

Consider the saddle-node singular point $N$. In some neighborhood $U_N$ of it, the family $V$ is orbitally smoothly equivalent to the family given by the equations:
\begin{equation}
	\left\{
	\begin{aligned}
	\dot {\tilde x} & = \frac{ \tilde x^2 + \lambda}{1 + a(\mu) \tilde x} \\
	\dot {\tilde y} & = - \tilde y,
	\end{aligned}
	\right.
\label{saddle-node normal form}
\end{equation}
where $a(\mu)$ is a smooth function of the two-dimensional parameter $\mu$.

Choose the transversal $\Gamma_1$ close to the saddle-node $N$ in the parabolic sector of the saddle-node, i.e., draw the curve $\Gamma_1$ so that all phase curves of the field $v_0$ tending to the saddle-node $N$ in forward time intersect it, and so that the stable separatrices $\beta_2$ and $\beta_1$ of the saddle-node $N$ intersect it at points $\mathfrak{a}$ and $\mathfrak{b}$ (these points will be the endpoints of the curve $\Gamma_1$). We may assume that the transversal $\Gamma_1$ lies on the boundary $\partial U_N$ of the domain $U_N$ (otherwise, shrink the neighborhood $U_N$). Also choose the transversal $\Gamma_3$ close to the saddle-node $N$ in the hyperbolic sector of the saddle-node, i.e., draw the curve $\Gamma_3$ so that it intersects the unstable separatrix of the saddle-node $N$. Similarly, we may assume that the transversal $\Gamma_3$ lies on the boundary $\partial U_N$ of the neighborhood $U_N$. (See Figure~\ref{Neighbourhood of LBS (conf C)}.)

\begin{figure}[t]
\center{\includegraphics[scale=0.6]{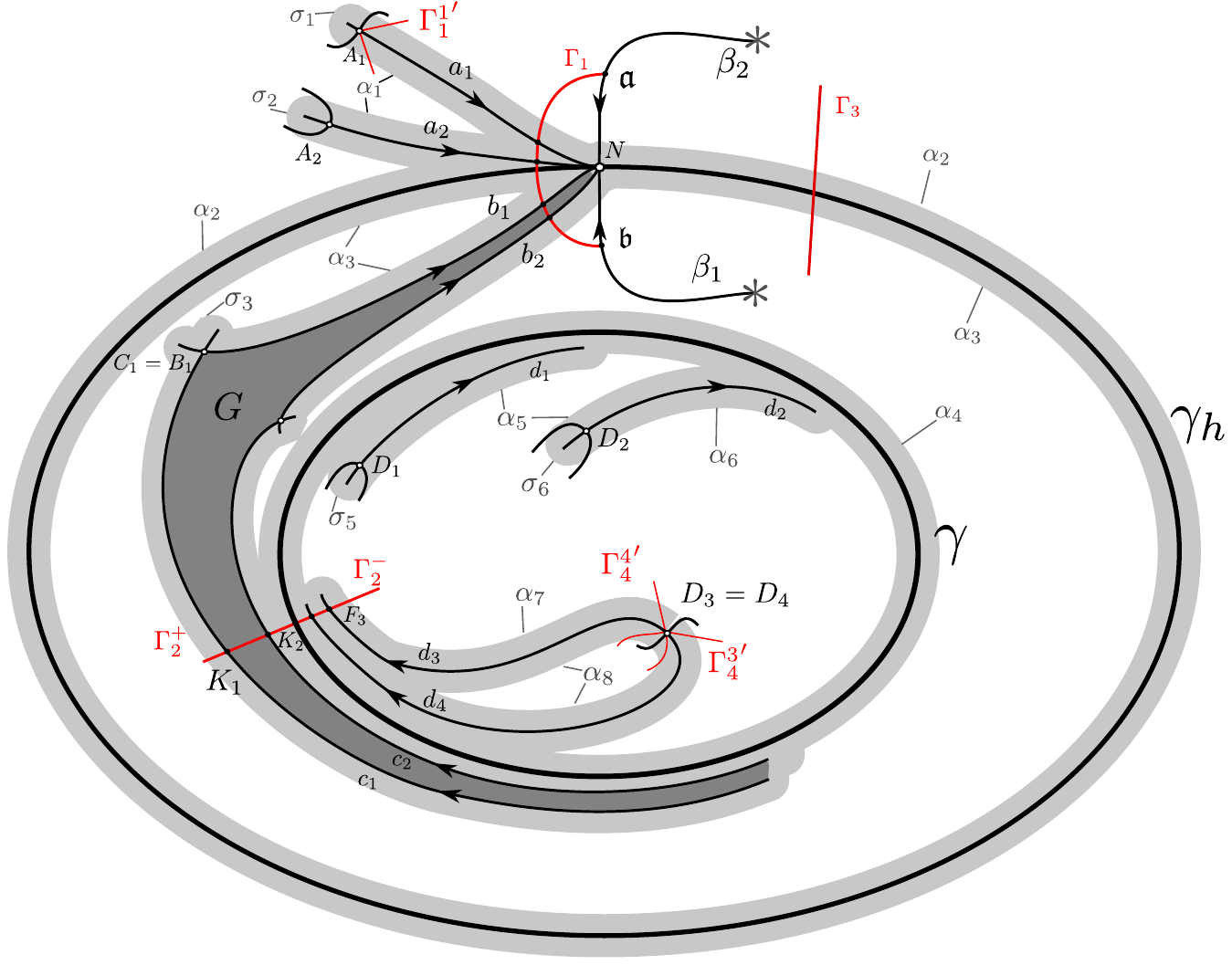}}
\caption{Neighborhood of the large bifurcation support of a family of configuration 101. The light gray region represents the neighborhood $U$. The dark gray region is the set $G$, which belongs to the large bifurcation support.
The bold black lines depict parts of the large bifurcation support, as well as the separatrices whose $\alpha$- or $\omega$-limit sets are interesting.}
\label{Neighbourhood of LBS (conf C)}
\end{figure}

In some neighborhood of the intersection point of the curve $\gamma_h$ with the transversal $\Gamma_3$, the mapping $g_{v_{\mu}} \colon \Gamma_3 \to \Gamma_1$ along the phase curves of the vector field $v_{\mu}$ is defined, i.e., the mapping that sends a point $x$ on the transversal $\Gamma_3$ to the point formed by the intersection of the phase curve passing through $x$ with the transversal $\Gamma_1$. Fix a neighborhood $U_{\gamma_h}$ of the part of the curve $\gamma_h$ bounded by the transversals $\Gamma_3$ and $\Gamma_1$ so that the integral curves of the unperturbed field $v_0$ that intersect $\Gamma_1$ and $\Gamma_3$ enter the region $U_{\gamma_h}$ only through points of the curve $\Gamma_3$ and leave this region through the remaining parts of its boundary. This property is preserved under small perturbation, so it may be assumed to hold for any field of the glocal family $V$.

Consider the separatrix $a_i$ tending to the saddle-node $N$ in positive time. Choose its neighborhood $U_{a_i}$ so that it is bounded near the saddle-node $N$ by the transversal $\Gamma_1$, and near the saddle $A_i$ by the stable manifold of that saddle. The two other boundary components of $\partial U_{a_i}$ are smooth curves $\alpha_i$ and $\alpha_{i+1}$, close to the separatrix $a_i$; choose these curves so that the phase curves of the unperturbed field $v_0$ intersecting them enter the domain $U_{a_i}$ through them. Construct the neighborhoods $U_{a_j}$ for all other separatrices $\{a_j\}$ analogously. The curve $\alpha_l$ (where $l = \# \{a_i\}$), lying on the boundary $\partial U_{a_l}$ of the neighborhood of the separatrix $a_l$, terminates at the transversal $\Gamma_1$ and intersects the boundary of the neighborhood $U_{\gamma_h}$. We assume that the curve lying on the boundary of the domain $U_{\gamma_h}$ is a continuation of the curve $\alpha_l$. Then the curve $\alpha_l$ starts at the stable manifold of the saddle $A_l$, passes near the separatrix $a_l$, goes around the homoclinic curve $\gamma_h$, and terminates at the transversal $\Gamma_3$, with all phase curves intersecting $\alpha_l$ entering the region it bounds. (In Figure~\ref{Neighbourhood of LBS (conf C)}, the curve $\alpha_2$ has this property.)

For saddles $\{A_i\}$ for which only one separatrix tends to the saddle-node $N$, take the neighborhood $U_{A_i}$ to be a ball in $\mathbb{R}^2$ of sufficiently small radius. It is divided by the stable manifold of the saddle $A_i$ into two half-disks; one of them belongs to the neighborhood $U_{a_i}$, and the other is bounded by the curve $\sigma_i$ (denote this half-disk by $\Sigma_{A_i}$). The curves $\sigma_i$ are tangent to each field $v_{\mu}$ of the family $V$ at exactly two points.

For the separatrices $\{d_i\}$ and saddles $\{D_i\}$ lying inside the topological disk bounded by the parabolic cycle $\gamma$, construct the neighborhoods $U_{d_i}$ and $U_{D_i}$ analogously. The boundaries of the sets $U_{d_i}$ are the curves $\alpha_{l+m+k+1}$, \dots, $\alpha_{l+m+k+n}$, as well as $\sigma_{l+m+k+1}$, \dots, $\sigma_{l+m+k+n}$ (the curves $\sigma_i$ appear as boundaries of those saddles for which only one separatrix belongs to the large bifurcation support).

For the separatrices $\{b_i\}$ and $\{c_i\}$ and their saddles, the neighborhoods are chosen analogously. If the separatrices $b_i$ and $c_j$ share a common saddle and lie on the boundary of a canonical domain belonging to the set $G$, then we choose a neighborhood of the entire canonical domain, i.e., the sets $U_{b_i}$ and $U_{c_j}$ are defined so that their boundary includes on one side the two curves $\alpha_{l+i}$ and $\alpha_{l+m+j}$, analogous to those constructed above, a segment of the transversal $\Gamma_1$, and a segment of the transversal $C^1$ (defined below), and on the other side the pair of separatrices $b_{i+1}$ and $c_{j+1}$, which lie on the boundary of the canonical domain belonging to the set $G$. Thus, in Figure~\ref{Neighbourhood of LBS (conf C)}, the neighborhoods $U_{b_1}$ and $U_{c_1}$ of the separatrices $b_1$ and $c_1$ are bounded by the curves $\alpha_3$ and $\alpha_4$ and cover the set $G$ up to the part of its boundary consisting of the separatrices $b_2$, $c_2$, and their common saddle $B_2 = C_2$.
If the family under consideration belongs to configurations with $a_2 = 0$, then the curve $\alpha_{l+1}$, starting at the stable manifold of the saddle $B_1$, intersects the transversal $\Gamma_1$ and the curve bounding the neighborhood $U_{\gamma_h}$ of the homoclinic trajectory $\gamma_h$. We assume that the curve bounding the domain $U_{\gamma_h}$ is a continuation of the curve $\alpha_{l+1}$. Then the curve $\alpha_{l+1}$ starts at the stable manifold of the saddle $B_1$, passes along the separatrix $b_1$, then along the homoclinic trajectory $\gamma_h$, then along the separatrix $b_m$, and terminates at the stable manifold of the saddle $B_m$. In Figure~\ref{Neighbourhood of LBS (conf C)}, the curve $\alpha_3$ has this property.

Let us draw two transversal loops $C^1$ and $C^2$ in a neighborhood of the parabolic cycle $\gamma$. Denote the annulus bounded by them by $U_\gamma$. Choose the loops $C^1$ and $C^2$ so that all phase curves of the field $v_0$ enter the domain $U_\gamma$ only through one of the curves and leave it only through the other.
As before, if the family belongs to configurations with $a_2 = 0$, then the curve $\alpha_{l+m-1}$, which is the boundary of the neighborhood $U_{c_1}$ of the separatrix $c_1$, intersects the curve $C^1$. We assume that a segment of the curve $C^1$ is a continuation of the curve $\alpha_{l+m-1}$. Then the curve $\alpha_{l+m-1}$ starts at the unstable manifold (in the case of configurations with $a_3 = 1$) or the stable manifold (in the case of $a_3 = 0$) of the saddle $C_1$, passes along the separatrix $c_1$, then along the parabolic cycle $\gamma$, then along the separatrix $c_m$, and terminates at the unstable manifold (for $a_3 = 1$) or the stable manifold (for $a_3 = 0$) of the saddle $C_m$. In Figure~\ref{Neighbourhood of LBS (conf C)}, the curve $\alpha_4$ plays this role.

Define the neighborhoods $U_{d_i}$ of the separatrices $d_i$ analogously. Their boundaries are segments of the curves $\alpha_{l+m+i-1}$ and $\alpha_{l+m+i}$. We assume that the curve $\alpha_{l+m+i}$ starts at the stable manifold of the saddle $D_i$, goes along the separatrix $d_i$ up to the curve $C^2$, then passes along a segment of the curve $C^2$ bounded by the points $\{d_i\} \cap C^2$ and $\{d_{i+1}\} \cap C^2$, then along the separatrix $d_{i+1}$, and terminates at the stable manifold of the saddle $D_{i+1}$. In Figure~\ref{Neighbourhood of LBS (conf C)}, the curves $\alpha_5$, $\alpha_6$, and $\alpha_7$ have this property.

Thus, we have constructed the neighborhood $U$ of the large bifurcation support of the family $V$, which can be expressed as the union of the sets $U_{A_i}$, $U_{a_i}$, $U_{B_i}$, $U_{b_i}$, $U_{C_i}$, $U_{c_i}$, $U_{D_i}$, $U_{d_i}$, $U_{N}$, $U_{\gamma_h}$, $U_{\gamma}$; and its boundary consists of the curves $\alpha_i$ and arcs $\sigma_i$, which are transverse to every field of the family $V$. Let $\tilde U$ be the analogously constructed neighborhood for the family $\tilde V$.

{\itshape Remark:} the stable separatrix $\beta_1$ of the saddle-node $N$ does not belong to the large bifurcation support if the family $V$ has configurations with $a_2 = 0$, and hence requires no separate neighborhood construction. If the family $V$ has configuration with $a_2 = 1$, then the separatrix $\beta_1$ belongs to the large bifurcation support, but since it bounds canonical domains belonging to the set $G$, it is already covered by the neighborhood of the set $G$ and requires no separate construction.

\subsection{Moderate Equivalence Mapping in a Neighborhood of the LBS}
\label{subsection about moderate equivalence in neighbourhoods of LBS}

Let $A_i(\varepsilon, \lambda)$ be the continuation of the saddle $A_i$ of the vector field $v_0$ with respect to the parameter, i.e., the saddle of the vector field $v_{\varepsilon, \lambda}$ such that the point $A_i(\varepsilon, \lambda)$ depends smoothly on the parameters and coincides with $A_i$ at $(\varepsilon, \lambda) = (0, 0)$. Similarly denote the continuations with respect to the parameter of the other elements of the large bifurcation support. Let, for $\lambda < 0$, the points $S(\varepsilon, \lambda)$ and $N(\varepsilon, \lambda)$ be the saddle and the topological node into which the saddle-node $N$ splits under the bifurcation. We have $S(\varepsilon, \lambda) \to N$ and $N(\varepsilon, \lambda) \to N$ as $\lambda \to 0+$. Assume that at $\lambda = 0$ we have $S(\varepsilon, 0) = N(\varepsilon, 0) = N$. For positive values of the parameter $\lambda$, the points $S(\varepsilon, \lambda)$ and $N(\varepsilon, \lambda)$ are not defined.

Denote by $K_i(\varepsilon, \lambda)$ the last intersection of the separatrix $c_i(\varepsilon, \lambda)$ with the transversal $\Gamma_2$, and by $F_j(\varepsilon, \lambda)$ the first intersection of the separatrix $d_j(\varepsilon, \lambda)$ with the transversal $\Gamma_2$. Let the point $K_{k+1}(\varepsilon, \lambda)$ be the last intersection of the separatrix $\beta_1$ with the transversal $\Gamma_2$; it is defined only for non-positive values of the parameter $\lambda$.

Let $\{g_{\varepsilon, \lambda}\}$ and $\{\tilde g_{\varepsilon, \lambda}\}$ be two finite-parameter families of mappings of the form
$$
y \mapsto y + (y^2 + \varepsilon)(1 + f(y, \varepsilon, \lambda)).
$$

Let $K = \{K_1, \ldots, K_k\}$ be a set of points on the interval $[K_1, g_{0, 0}(K_1)] \times \{0, 0\}$ with coordinates $x(K_i)$ satisfying $0 < x(K_j) < x(K_{j+1})$ for any $j \le n-1$. Let $\Gamma_K$ be a collection of smooth hypersurfaces $\Gamma_{K_1}, \ldots, \Gamma_{K_k}$ in the ambient space $(x, \varepsilon, \lambda)$, transverse to the line $(\varepsilon, \lambda)=(0, 0)$, with $K_j \in \Gamma_{K_j}$. Let $\Gamma_F$ be the analogous collection of hypersurfaces with $F_j \in \Gamma_{F_j}$. Let the sets $K$ and $F$ satisfy the non-synchronization condition. Such a family $\{g_{\varepsilon, \lambda}\}$ together with fixed sets $\Gamma_K$ and $\Gamma_F$ is called a {\itshape saddle-node family}. Let $\{\tilde g_{\varepsilon, \lambda}\}$ be another saddle-node family with sets $\tilde K$, $\tilde F$, $\Gamma_{\tilde K}$, and $\Gamma_{\tilde F}$, possessing the same properties.

\thm \label{theorem about saddle-node family of maps} \cite{SN-SN family} \textbf{(Lemma on marked saddle-node families)} There exists a weak equivalence mapping $\mathcal{H}$ of the families $\{g_{\varepsilon, \lambda}\}$ and $\{\tilde g_{\varepsilon, \lambda}\}$ that sends $K_j$ to $\tilde K_j$, $(\Gamma_{K_j}, K_j)$ to $(\Gamma_{\tilde K_j}, \tilde K_j)$, $F_j$ to $\tilde F_j$, and $(\Gamma_{F_j}, F_j)$ to $(\Gamma_{\tilde F_j}, \tilde F_j)$. Moreover, the mapping $\mathcal{H}$ is continuous on the sets $K$, $F$, and at the point $(0, 0, 0)$.

We can now proceed to constructing the moderate equivalence mapping $\mathbf{H}$ in the neighborhood of the large bifurcation supports of the families $V$ and $\tilde V$. To construct it near the parabolic cycle $\gamma$, we will use Theorem~\ref{theorem about saddle-node family of maps} stated above.

Consider the saddle-node family of mappings $\{g_{\varepsilon, \lambda}\}$ given by the action of the Poincaré map on the transversal $\Gamma_2$ in the neighborhood of the limit cycle $\gamma$. As before, let $x$ be the normalizing coordinate on the transversal $\Gamma_2$. We fixed on the transversal $\Gamma_2$ the points $K_1(\varepsilon, \lambda), \ldots, K_{k+1}(\varepsilon, \lambda)$, defined as the points of the last intersection of the separatrices $c_i$ and the separatrix $\beta_1$ with the transversal $\Gamma_2$, and introduced the points $F_1(\varepsilon, \lambda), \ldots, F_k(\varepsilon, \lambda)$ as the points of the first intersection of the separatrices $d_i$ with the transversal $\Gamma_2$. Then, in the notation of Theorem~\ref{theorem about saddle-node family of maps}, we consider the hypersurfaces $\Gamma_K$ and $\Gamma_F$, defined as
$$
\Gamma_{K_j} = \underset{\varepsilon, \lambda} \cup \: K_j(\varepsilon, \lambda), \quad \Gamma_{F_j} = \underset{\varepsilon, \lambda} \cup \: F_j(\varepsilon, \lambda).
$$
In this case, these are two-dimensional objects. The surface $\Gamma_{K_{k+1}}$ is defined only for non-positive values of the parameter $\lambda$. Let $\tilde K$, $\tilde F$, $\Gamma_{\tilde K}$, and $\Gamma_{\tilde F}$ be the analogously defined objects for the family of mappings $\{\tilde g_{\varepsilon, \lambda}\}$ induced by the Poincaré maps of the family $\tilde V$ near the parabolic cycle $\tilde \gamma$.

By Theorem~\ref{theorem about saddle-node family of maps}, we have a weak equivalence mapping $\mathcal{H}$ of the families $\{g_{\varepsilon, \lambda}\}$ and $\{\tilde g_{\varepsilon, \lambda}\}$, which is continuous at the points $K_j(0, \lambda_0)$ and $F_j(0, \lambda_0)$. Accordingly, the inverse mapping $\mathcal{H}^{-1}$ is continuous at the points $\tilde K_j(0, \lambda_0)$ and $\tilde F_j(0, \lambda_0)$. Define the mapping on the parameter base as
$$
h(\varepsilon, \lambda) = (h_\mathcal{H}(\varepsilon), \lambda),
$$
where $h_\mathcal{H}$ is the parameter transformation obtained from the weak equivalence mapping $\mathcal{H}$ of saddle-node families. We use this parameter change $h$ to define the moderate equivalence mapping $\mathbf{H} = (h, \mathbf{H}_{\varepsilon, \lambda})$ of the families $V$ and $\tilde V$ in neighborhoods of their large bifurcation supports. Such a reparameterization ensures that vector fields $v_{\varepsilon, \lambda}$ possessing a saddle connection are mapped to vector fields $\tilde v_{h(\varepsilon, \lambda)}$ also possessing a saddle connection.

Recall that $\Sigma_{A_i}$ are connected neighborhoods of the saddles $A_i$ for which only one separatrix belongs to the large bifurcation support; these domains are bounded by the stable manifolds of the saddles $A_i$, are homeomorphic to a half-disk, and cover the germ of the unstable separatrix not belonging to the large bifurcation support. The sets $\Sigma_{B_i}$, $\Sigma_{C_i}$, and $\Sigma_{D_i}$ are defined analogously.

Here we begin the construction of the homeomorphism $\mathbf{H}_{\varepsilon, \lambda}$. The homeomorphism $\left. \mathbf{H}_{\varepsilon, \lambda} \right| _ {\Sigma_{A_i}} \colon \Sigma_{A_i} \to \tilde \Sigma_{A_i}$, continuous in the parameters and conjugating the vector fields $v_{\varepsilon, \lambda}$ and $\tilde v_{h_{\mathcal{H}}(\varepsilon), \lambda}$, is constructed as the restriction of the homeomorphism conjugating the vector fields in a neighborhood of the hyperbolic point $A_i$ to the domain $\Sigma_{A_i}$. The same applies to the domains $\Sigma_{B_i}$, $\Sigma_{C_i}$, and $\Sigma_{D_i}$.

Let ${\Gamma_1^i}'$ and ${\Gamma_1^i}''$ be transversals to the field $v_0$ connecting the saddle $A_i$ to a point on the boundary of the neighborhood $U_{a_i}$, not intersecting the separatrices of the saddle $A_i$. Similarly, let the transversals ${\Gamma_2^i}'$ and ${\Gamma_2^i}''$ connect the saddles $B_i$ to points on the boundary of the neighborhood $U_{b_i}$, the transversals ${\Gamma_3^i}'$ and ${\Gamma_3^i}''$ connect the saddles $C_i$ to points on the boundary of the neighborhood $U_{c_i}$, and the transversals ${\Gamma_4^i}'$ and ${\Gamma_4^i}''$ connect the saddles $D_i$ to points on the boundary of the neighborhood $U_{d_i}$. (Figure~\ref{Neighbourhood of LBS (conf C)} shows the transversals ${\Gamma_1^1}'$ and ${\Gamma_1^1}''$ near the saddle $A_1$, as well as the transversals ${\Gamma_4^3}'$, ${\Gamma_4^3}''$, ${\Gamma_4^4}'$, and ${\Gamma_4^4}''$ near the coincident saddles $D_3$ and $D_4$.)

In the sector ${S_i}'$ of the neighborhood of the saddle $A_i$, bounded by the separatrix of the saddle, the transversal ${\Gamma_1^i}'$, and the boundary of the set $U_{a_i}$, construct the mapping $\left. \mathbf{H}_{\varepsilon, \lambda} \right| _ {{S_i}'} \colon {S_i}' \to \tilde {S_i}'$ as follows. Every trajectory of the field $v_{\varepsilon, \lambda}$ enters the sector ${S_i}'$ through the arc $\alpha_{i-1}$ on the boundary of the set $U_{a_i}$ and exits through the transversal ${\Gamma_1^i}'$. If the trajectory of $v_{\varepsilon, \lambda}$ intersects $\alpha_{i-1}$ at the point $p(\varepsilon, \lambda)$ and the transversal ${\Gamma_1^i}'$ at the point $q(\varepsilon, \lambda)$, set
\begin{equation}
 \mathbf{H}_{\varepsilon, \lambda}(p(\varepsilon, \lambda)) = \tilde p(h_{\mathcal{H}}(\varepsilon), \lambda), \quad \mathbf{H}_{\varepsilon, \lambda}(q(\varepsilon, \lambda)) = \tilde q(h_{\mathcal{H}}(\varepsilon), \lambda),
 \label{formula for homeomorphism of vector families}
\end{equation}
and define the homeomorphism $\mathbf{H}_{\varepsilon, \lambda}$ on the remaining part of the trajectory $pq$ so that it {\itshape preserves relative distance}, i.e., if the point $r$ divides the arc $pq$ in the ratio $u/(1-u)$, then its image $H(r)$ divides the arc $H(p)H(q)$ in the same ratio $u/(1-u)$.

Now consider the sector ${S_i}''$ of the neighborhood of the saddle $A_i$, bounded by the separatrix of the saddle, the transversal ${\Gamma_1^i}''$, and the boundary of the set $U_{a_i}$. Construct the mapping $\left. \mathbf{H}_{\varepsilon, \lambda} \right| _ {{S_i}''}$ on it in an analogous manner, using formula~(\ref{formula for homeomorphism of vector families}) for the points $p$ and $q$ on the boundary of the sector ${S_i}''$ and defining it on the remaining points of the trajectories $pq$ with the requirement of preserving relative distance.

Analogously, consider the part of the domain $U_{a_i}$ bounded by the transversal ${\Gamma_1^i}'$, the curve $\alpha_{i-1}$, the transversal $\Gamma_1$ near the saddle-node $N$, and the separatrix $a_i$. Every trajectory of the field $v_{\varepsilon, \lambda}$ enters this domain through the curves ${\Gamma_1^i}'$ and $\alpha_{i-1}$, and exits through the transversal $\Gamma_1$. Thus, the homeomorphism $\mathbf{H}_{\varepsilon, \lambda}$ can again be defined using formula~(\ref{formula for homeomorphism of vector families}) for points $p \in {\Gamma_1^i}' \cup \alpha_{i-1}$ and points $q \in \Gamma_1$, and extended to the remaining part by the principle of preserving relative distance. The mapping $\mathbf{H}_{\varepsilon, \lambda}$ on the remaining part of the domain $U_{a_i}$, bounded by the transversal ${\Gamma_1^i}''$, the curve $\alpha_{i}$, the transversal $\Gamma_1$, and the separatrix $a_i$, is defined in exactly the same way.

On the separatrix $a_i(\varepsilon, \lambda)$ itself of the saddle $A_i(\varepsilon, \lambda)$, we also define the homeomorphism $\mathbf{H}_{\varepsilon, \lambda}$ as the mapping that sends separatrices to separatrices while preserving relative distance. This completes the construction of the mapping $\mathbf{H}$ on the set $\cup_i(U_{A_i} \cup U_{a_i})$, i.e., in the neighborhoods of the separatrices $a_i$ up to the transversal $\Gamma_1$.

The mapping $\mathbf{H}$ in the neighborhoods $U_{B_i}$ of the saddles $B_i$ and the neighborhoods $U_{b_i}$ of the separatrices $b_i$ up to the transversal $\Gamma_1$ is constructed by the same principle.

In the neighborhood $U_N$ of the saddle-node $N$, there is a smooth equivalence to the local family given by equation~(\ref{saddle-node normal form}). This defines the mapping $\mathbf{H}$ on the domain $U_N$.

In the neighborhood $U_{\gamma_{h}}$ of the homoclinic curve $\gamma_h$, the following property holds: all trajectories of the field $v_{\varepsilon, \lambda}$ enter the domain $U_{\gamma_{h}}$ only through the transversal $\Gamma_3$ and the curves $\alpha_m$ and $\alpha_{m+1}$, and leave it through the transversal $\Gamma_1$. Therefore, we can define the mapping $\mathbf{H}_{\varepsilon, \lambda}$ using formula~(\ref{formula for homeomorphism of vector families}) for points $p$ on the curves $\Gamma_3$, $\alpha_m$, and $\alpha_{m+1}$ and points $q$ on the curve $\Gamma_1$, and extend it to the remaining points along trajectories by the principle of preserving relative distance.

Now construct the homeomorphism $\mathbf{H}$ in the neighborhood $U_{\gamma}$ of the parabolic cycle $\gamma$. Its restriction to the transversal $\Gamma_2$ has already been obtained from Theorem~\ref{theorem about saddle-node family of maps}. If the family $V$ has configuration with $a_3 = 1$, then the parabolic cycle $\gamma$ is repelling from the side of the saddle-node. Therefore, the backward semi-orbit $\{g_v^t(p)\}_{t \le 0}$ of any point $p \in \Gamma_2^+$ on the semi-transversal $\Gamma_2^+$ winds onto the parabolic cycle $\gamma$ in backward time. Let $q$ be the penultimate intersection of the backward semi-orbit of the point $p$ with the transversal $\Gamma_2$ (the last intersection being the point $p$ itself). By the construction of the mapping $\mathbf{H}$ on the transversal $\Gamma_2$, the equality from formula~(\ref{formula for homeomorphism of vector families}) already holds for the points $p$ and $q$. Define the homeomorphism $\mathbf{H}$ on the trajectory arcs bounded by these points with preservation of relative distance. Define $\mathbf{H}$ analogously for points $p \in \Gamma_2^-$ on the semi-transversal $\Gamma_2^-$ inside the cycle $\gamma$, but here consider not the backward semi-orbits of the points $p$ but the forward ones. On the parabolic cycle itself, define $\mathbf{H}_{0, \lambda}$ also with preservation of relative distance. The mapping constructed in this way is continuous for each fixed value of the parameter $\mu = (\varepsilon, \lambda)$ in a neighborhood of the parabolic cycle by the theorem on continuous dependence of solutions of a differential equation on initial conditions.
Continuity on the separatrix skeleton follows from Theorem~\ref{theorem about saddle-node family of maps}.

Thus, the mapping in the neighborhood $U_\gamma$ of the parabolic cycle $\gamma$ is defined and continuous on the large bifurcation support. (If the family $V$ has configuration with $a_3 = 0$, i.e., its parabolic cycle attracts from the side of the saddle-node, then the same argument applies with the only modification of replacing all considered backward semi-orbits by forward ones, and forward ones by backward ones.)

In the neighborhoods $U_{C_i}$ and $U_{c_i}$ (respectively $U_{D_i}$ and $U_{d_i}$) of the saddles $C_i$ and separatrices $c_i$ (respectively saddles $D_i$ and separatrices $d_i$), the mapping $\mathbf{H}$ is defined in the same manner as for the saddles $A_i$ and separatrices $a_i$.

Note that since we chose the neighborhoods $U_{b_i}$ and $U_{c_i}$ of the separatrices $b_i$ and $c_i$ so that they also cover part of the set $G$, the homeomorphism $\mathbf{H}$ defined on the domains $U_{b_i}$ and $U_{c_i}$ is also defined on part of the set $G$. The remaining part of the set $G$ is covered by the sets $U_N$ and $U_{\gamma}$, on which the homeomorphism $\mathbf{H}$ is also defined. Therefore, the mapping $\mathbf{H}$ is defined throughout the entire neighborhood $U$ of the large bifurcation support of the family $V$.

\textbf{Proof of Theorem~\ref{theorem characteristic sets implies moderate equivalence}.} Let $V$ and $\tilde V$ be generic PC-HC families with equivalent characteristic sets and the same configuration. Take the neighborhood $U$ of the large bifurcation support LBS($V$) constructed in Section~\ref{subsection about choice of neighbourhood of LBS}, and define on it the homeomorphism $\mathbf{H}$ as described above. From Theorem~\ref{theorem about saddle-node family of maps} and the explicit construction of the homeomorphism $\mathbf{H}$, we can see that this homeomorphism is continuous at the points of the set~(\ref{set from definition of moderate equivalence}). As the neighborhood $W$ of the large bifurcation support LBS($\tilde V$) of the family $\tilde V$, take the image of the neighborhood $U$ under the mapping $\mathbf{H}$. It is readily verified that the inverse mapping $\mathbf{H}^{-1} \colon W \to U$ is also continuous at the points of the set~(\ref{the reverse set from definition of moderate equivalence}). \qed

\textbf{Proof of Theorem~\ref{theorem about conjugation of unpertubed fields implies weak equivalence}.} Let the unperturbed fields $v_0$ and $w_0$ of two generic PC-HC families $V$ and $W$ be close and orbitally topologically equivalent. Then their configurations coincide and their characteristic sets are equivalent. Hence, by Theorem~\ref{theorem characteristic sets implies moderate equivalence}, there exists a moderate equivalence homeomorphism $\mathbf{H}$ in neighborhoods of the large bifurcation supports of the families $V$ and $W$. But then, by Theorem~\ref{main theorem about large bifurcation support}, the families $V$ and $W$ are weakly equivalent on the entire sphere $S^2$. \qed

\subsection{Conditional Structural Stability of Generic Vector Fields of the PC-HC Class}
\label{subsection about final proof of structural stability of PC-HC families}

Consider the set of all vector fields on the sphere with a generic PC-HC degeneracy. It forms a subspace $\mathfrak{S}_{PC-HC}$ of codimension 2 in the space $Vect^*(S^2)$. In this section we show that generic PC-HC vector fields are conditionally structurally stable, i.e., structurally stable under perturbations within the space $\mathfrak{S}_{PC-HC}$. Using the classical terminology of A.\,A.~Andronov, we can say that we prove stability in the space of fields of second degree of instability (non-roughness).

\lemma \label{lemma about structural stability of vector fields} Let $V$ and $\tilde V$ be two PC-HC families close in the natural topology, and let $v_0$ and $\tilde v_0$ be their unperturbed vector fields. Then $v_0$ and $\tilde v_0$ are orbitally topologically equivalent.

{\itshape Proof.} Divide the phase space $S^2$ into two subsets $U_1$ and $U_2$. Let $U_1$ be a neighborhood of the set $\overline{\gamma}_h \cup \gamma$, consisting of the union of the neighborhoods of these two curves chosen above: $U_1 = U_N \cup U_{\gamma_h} \cup U_\gamma$.
And let the set $U_2$ be the complement of $U_1$.

By definition of a field $v_0$ of the PC-HC class, all singular points of the field $\left. v_0 \right|_{U_2}$ are hyperbolic, all closed orbits of the field are limit cycles, and there are no saddle connections. Under small perturbation, the $\alpha$- and $\omega$-limit sets of any separatrix are preserved. It follows that the field $\left. v_0 \right|_{U_2}$ is structurally stable. In particular, there exists a mapping $\left. \hat H \right|_{U_2}$ of orbital equivalence between the fields $\left. v_0 \right|_{U_2}$ and $\left. \tilde v_0 \right|_{\hat H(U_2)}$.

The mapping $\hat H$ is defined on the set $U_2$, and hence can be extended continuously to the boundary of the set $U_1$. Define the mapping $\hat H$ in the neighborhood $U_\gamma$ of the parabolic cycle $\gamma$. Consider the positive semi-orbit $\{g_t(p)\}_{t \ge 0}$ of an arbitrary point $p$ on the boundary of the domain $U_\gamma$. Let this trajectory intersect the transversal $\Gamma_2$ at the points $\{p_n\}_{n \in \mathbb{N}}$. This sequence converges to the limit point $q = \Gamma_2 \cap \gamma$. Let $\tilde p$ be the image of $p$ under $\hat H$, and let $\{\tilde p_n\}_{n \in \mathbb{N}}$ correspond to the intersection points of the trajectory $\{g_t(\tilde p)\}_{t \ge 0}$ with the transversal $\tilde \Gamma_2$. Define the mapping $\hat H$ so that it sends the arcs $pp_1$ and $p_ip_{i+1}$ of the trajectory to the arcs $\tilde p \tilde p_1$ and $\tilde p_i \tilde p_{i+1}$ of the trajectory with preservation of relative distance. Define the mapping $\hat H$ in this way for all points $p$ on the boundary of the domain $U_\gamma$, considering forward or backward semi-orbits of the points depending on which side the limit cycle $\gamma$ is repelling from. By the theorem on continuous dependence of solutions on initial conditions, the resulting mapping is continuous on $U_\gamma \setminus \gamma$. Extend the mapping to the cycle $\gamma$ itself also by continuity.

In the neighborhood of the curve $\overline{\gamma}_h$, extend the mapping by sending the trajectory passing through a point $p$ on the boundary of the domain $U_{\gamma} \cup U_N$ to the trajectory passing through the point $\hat H (p)$ on the boundary of the domain $\hat H (U_{\gamma} \cup U_N)$, with preservation of relative distance. In this process, the segments of the separatrices $\beta_1$ and $\beta_2$ are mapped to the segments of the separatrices $\tilde \beta_1$ and $\tilde \beta_2$. Thus, the mapping $\hat H$ turns out to be defined on all of $S^2$, except for the curve $\overline{\gamma}_h$, to which the mapping can be extended by continuity. \qed

Thus, using Theorem~\ref{main theorem about large bifurcation support}, we can conclude that a generic family $V$ of the PC-HC class is structurally stable. This completes the proof of Theorem~\ref{theorem about structural stability of PC-HC families}.

\section{Classification of Large Bifurcation Supports of PC-HC Families}

In this section we show that the combinatorial data---the configuration and the equivalence class of the characteristic sets---are not only invariants of moderate equivalence in neighborhoods of large bifurcation supports (which follows from Theorem~\ref{theorem characteristic sets implies moderate equivalence}), but also completely determine the PC-HC family in some neighborhood of the large bifurcation support up to the said equivalence. Our strategy is to prove that any given combinatorial data are realizable by some field (Lemma~\ref{Realization lemma}), and to establish that the mapping sending a family to its combinatorial data is a bijection (Theorem~\ref{theorem about bijection between characteristic sets and vector families}).

\subsection{Realization}

Let a collection of two finite sets $\mathcal{L}_1$ and $\mathcal{L}_2$ on an interval and two finite sets $\mathcal{A}^+$ and $\mathcal{A}^-$ on a circle be given. Each of these sets is marked, i.e., partitioned into equivalence classes of one or two points each. A liaison relation $\sim$ between equivalence classes of the sets $\mathcal{L}_2$ and $\mathcal{A}^+$ is also given.

Introduce a partial order $\le_0$ on the sets $\mathcal{L}_1$ and $\mathcal{L}_2$: for two equivalence classes $\{ \mathfrak{b}_i, \mathfrak{b}_j \}$ and $\{ \mathfrak{b}_r, \mathfrak{b}_s \}$, we say that the first is ``no greater'' than ($\le_0$) the second if the interval $[ \mathfrak{b}_i, \mathfrak{b}_j ]$, bounded by the points of the first class, is entirely contained in the interval $[ \mathfrak{b}_r, \mathfrak{b}_s ]$, bounded by the points of the second class. One-element equivalence classes can only be minimal elements of the partial order $\le_0$. To define the partial order on the circle, fix a point $p \in S^1$ and introduce the order $\le_p$ as follows: the equivalence class $\{ \mathfrak{c}_i, \mathfrak{c}_j \}$ is ``no greater'' than the class $\{ \mathfrak{c}_r, \mathfrak{c}_s \}$ if the arc $\overline{\mathfrak{c}_i \mathfrak{c}_j}$, bounded by the points $\mathfrak{c}_i$ and $\mathfrak{c}_j$ and not containing the point $p$, belongs entirely to the arc $\overline{\mathfrak{c}_r \mathfrak{c}_s}$, bounded by the points $\mathfrak{c}_r$ and $\mathfrak{c}_s$ and not containing $p$. We also assume that an angular coordinate $\varphi_p$ is fixed on the circle on which the set $\mathcal{A}^+$ is defined, so that $\varphi_p(p) = 0$.

\defi \label{realization conditions} We call such a collection of sets $\mathcal{L}_1$, $\mathcal{L}_2$, $\mathcal{A}^+$, $\mathcal{A}^-$ {\itshape realizable with configuration $a_1a_2a_3$} if the following conditions hold:
\begin{itemize}
    \item[ ]
    \begin{enumerate}
        \item two-element equivalence classes of the characteristic sets do not interleave;
        \item points in the liaison relation $\sim$ are arranged consecutively, i.e., there exists a point $p$ on the circle $S^1$ such that for any pairs of points in liaison $\mathfrak{b}_i \sim \mathfrak{c}_r$ and $\mathfrak{b}_j \sim \mathfrak{c}_s$, the implication holds:
        $$
        \zeta(\mathfrak{b}_i) \leqslant \zeta(\mathfrak{b}_j) \quad \Longrightarrow \quad \varphi_p(\mathfrak{a}_r) \leqslant \varphi_p(\mathfrak{a}_s);
        $$
    \end{enumerate}
    \item for configurations with $a_2=0, \; a_3 = 1$:
    \begin{enumerate}
        \item[3.] only maximal elements of the partial orders $\le_0$ and $\le_p$ can be in the liaison relation;
        \item[4.] points in $\mathcal{L}_2$ are in the liaison relation in pairs, i.e., if for some point $\mathfrak{b_i} \in \mathcal{L}_2$ we have $\mathfrak{b}_i \sim \mathfrak{c}_r$, then $\mathfrak{b}_{i-1}$ is in the liaison relation with $\mathfrak{c}_r$ or with $\mathfrak{c}_{r-1}$, or $\mathfrak{b}_{i+1}$ is in the liaison relation with $\mathfrak{c}_r$ or with $\mathfrak{c}_{r+1}$;
    \end{enumerate}
    \item for configurations with $a_2 = 1$ and $a_3 = 1$:
    \begin{enumerate}
        \item[3.] only maximal elements of the partial orders $\le_0$ and $\le_p$ can be in the liaison relation;
        \item[5.] in the set $\mathcal{A}^+$ there exists a point $\mathfrak{a}_{k+1}$ such that conditions 2 and 3 hold for the partial order $\le_{\mathfrak{a}_{k+1}}$;
    \end{enumerate}
    \item for configurations with $a_3 = 0$:
    \begin{enumerate}
        \item[6.] the sets $\mathcal{L}_2$ and $\mathcal{A}^+$ contain no two-element equivalence classes.
    \end{enumerate}
\end{itemize}

\lemma \label{Realization lemma} \textbf{(Realization Lemma)} Let a collection of four sets $\mathcal{L}_1$, $\mathcal{L}_2$, $\mathcal{A}^+$, $\mathcal{A}^-$ realizable with configuration $a_1a_2a_3$ be given. Then there exists an infinitely smooth vector field $v$ of the PC-HC class with configuration $a_1a_2a_3$ whose characteristic sets coincide with the given ones.

Before beginning the proof of this statement, let us recall the formulation of the realization lemma for disks.

\lemma \label{realization lemma for discs} \cite{one-parameter family proof 2} Let a finite marked set $\mathcal{A}$ on the boundary circle $C$ of a disk $D$ be given. Then there exists an infinitely smooth Morse--Smale vector field $v$ on the disk $D$ for which the set $\mathcal{A}$ is the characteristic set.
\vspace{-6pt}

{\itshape Remark:} the field $v$ may be assumed to be orthogonal to the curve $C$.

\textbf{Proof of Lemma~\ref{Realization lemma}.}
We begin by constructing those elements of the phase portrait of the field $v$ that do not depend on the configuration.

\begin{wrapfigure}{r}{0pt}
	\includegraphics[scale=0.27]{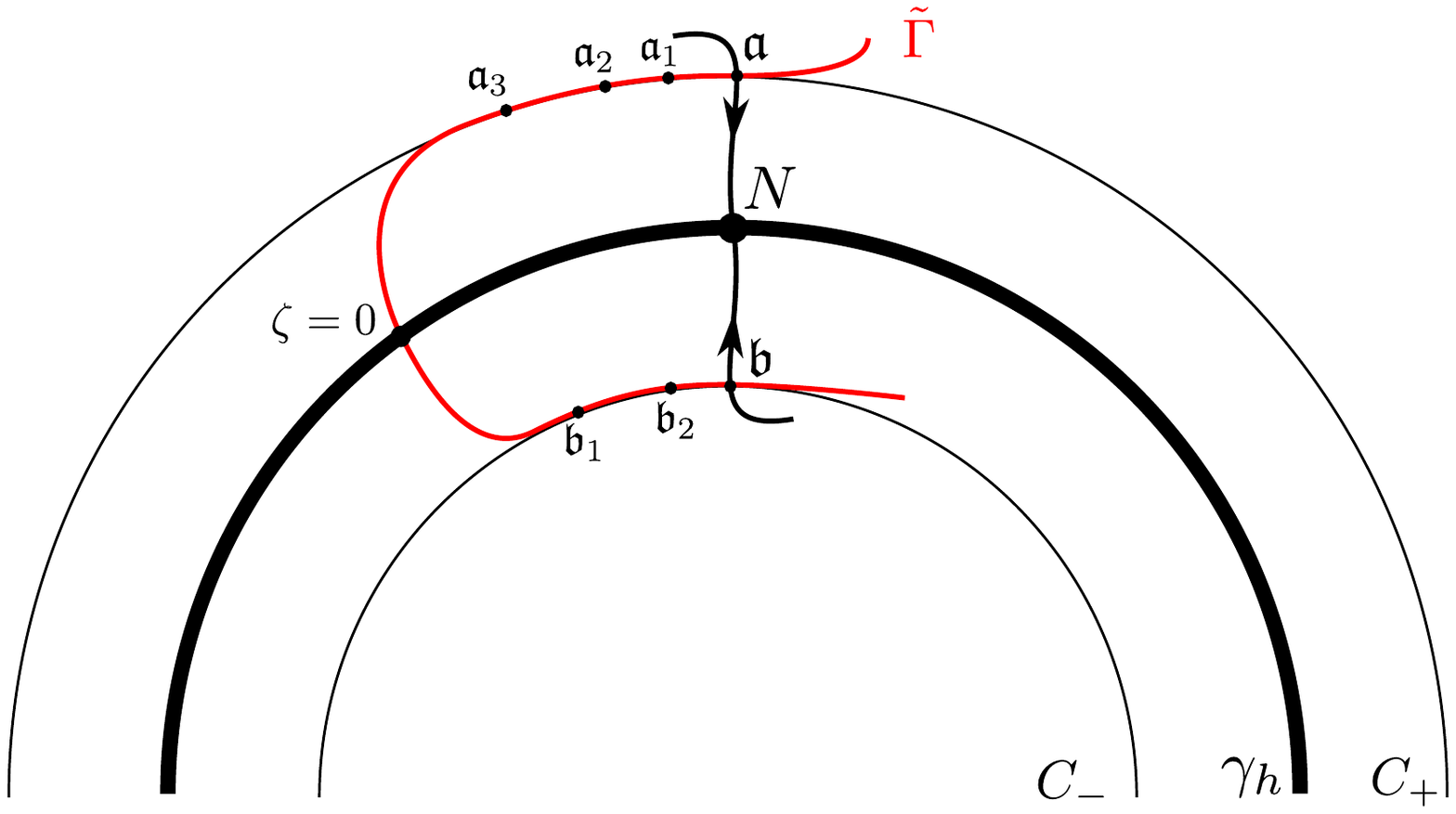}
	\caption{Part of the annulus $U_{HC}$}
	\label{neighbourhood of U_{HC}}
\end{wrapfigure}

Consider a local chart on the phase space $S^2$ with polar coordinates $(r, \phi)$. Let $U_{HC}$ be the annulus $4 \le r \le 6$. Define a vector field $v$ on it so that it has a homoclinic curve $\gamma_h$ of the saddle-node $N$ and no other degeneracies. Let this curve $\gamma_h$ be the circle $\{r = 5\}$, and let the saddle-node $N$ be located at the point $(5, \pi/2)$. For example, the field $v$ in a neighborhood of the curve $\{r=5\}$ can be given by the system
\begin{equation*}
	\left\{
		\begin{aligned}
		\dot  r & = 5-r\\
		\dot \phi & = - \cos^2\left(\frac{\phi}{2} + \frac{\pi}{4}\right).
		\end{aligned}
	\right.
\end{equation*}
Also, in a neighborhood of the boundary circles $C_-$ and $C_+$ of the annulus $U_{HC}$, define the field $v$ so that near $C_-=\{r=4\}$ it coincides with $\partial / \partial r$, and near $C_+=\{r=6\}$ it coincides with $-\partial / \partial r$.

Let $D^+$ be a two-dimensional disk whose boundary $\partial D^+$ coincides with the outer boundary $C_+$ of the annulus $U_{HC}$. To extend the field $v$ to the disk $D^+$, we need to reconcile it with the characteristic set $\mathcal{L}_1$. We may assume that both sets $\mathcal{L}_1$ and $\mathcal{L}_2$ lie on the interval $[-1, 1]$, with the coordinates of points from $\mathcal{L}_2$ strictly less than zero and the coordinates of points from $\mathcal{L}_2$ strictly greater than zero. Draw in the annulus $U_{HC}$ a smooth curve $\tilde \Gamma$, parameterized by the interval $[-1, 1]$ with coordinate $\zeta$, so that each point corresponding to the parameter value $\zeta(\mathfrak{a}_i)$ (where $\mathfrak{a}_i \in \mathcal{L}_1$) lies on the boundary curve $C_+$ of the annulus $U_{HC}$, and each point corresponding to the parameter value $\zeta(\mathfrak{b}_j)$ (where $\mathfrak{b}_j \in \mathcal{L}_2$) from the set $\mathcal{L}_2$ lies on the boundary curve $C_-$. (A possible arrangement of such a curve is shown in Figure~\ref{neighbourhood of U_{HC}}.) Moreover, let the curve $\tilde \Gamma$ be transverse to the field $v$ defined on the annulus $U_{HC}$. This allows us to transfer the characteristic set $\mathcal{L}_1$ to the curve $C_+$. By Lemma~\ref{realization lemma for discs}, the field $v$ can be extended to the disk $D^+$ so that the set $\mathcal{L}_1$ on $\partial D^+ = C_+$ is the characteristic set for the field $\left. v \right|_{D^+}$. The curve $\tilde \Gamma$ also allows us to transfer the set $\mathcal{L}_2$ to the circle $C_-$.

We now proceed to constructing the field $v$ in domains that depend on the configuration. Consider the set $U_{PC}$---the annulus $1 \le r \le 3$ in polar coordinates $r, \phi$. Define the field $v$ on it so that it has no singular points and has one parabolic cycle coinciding with the circle $\{r=2\}$. Such a field in a neighborhood of $\{r=2\}$ can be given by the system
\begin{equation*}
	\left\{
		\begin{aligned}
		\dot  r & = \pm (r - 2)^2\\
		\dot \phi & = \pm 1.
		\end{aligned}
	\right.
\end{equation*}
In this system, the signs $\pm$ are chosen according to the configuration the field should belong to.
If the configuration contains $a_3=1$, then the plus sign is chosen in the expression for $\dot r$; in the case $a_3=0$, the minus sign is chosen. If the configuration contains $a_1=1$, then the plus sign is chosen in the expression for $\dot \phi$; otherwise, the minus sign.
\vspace{-5pt}

Also let the field $v$ coincide with $\pm \partial / \partial r$ in neighborhoods of the boundary circles $C^-=\{r=1\}$ and $C^+=\{r=3\}$. The sign $\pm$ is chosen the same as in the expression for $\dot r$. We assume the field $v$ has no other degeneracies on $U_{PC}$.

Identify the circles $S^1$ on which the sets $\mathcal{A}^{\pm}$ are defined with the boundary circles $C^{\pm}$ of the annulus $U_{PC}$. Let $D^-$ be the two-dimensional disk $\{ r<1 \}$, whose boundary $\partial D^-$ coincides with the inner boundary $C^-$ of the annulus $U_{PC}$. The set $\mathcal{A}^-$ is given on the circle $C^-$. Again by Lemma~\ref{realization lemma for discs}, the field $v$ can be extended to the disk $D^-$ so that the set $\mathcal{A}^-$ on $\partial D^-$ is the characteristic set for the field $v$.

Now consider the annulus $U_b = \{3 \le r \le 4\}$. Its outer boundary is the circle $C_-$, on which the set $\mathcal{L}_2$ is given, and its inner boundary is the circle $C^+$, on which the set $\mathcal{A}^+$ is fixed. We construct the field $v$ on the domain $U_b$ step by step, considering various configurations separately.

\textbf{Configurations 111 and 011}
\vspace{-5pt}

Step 1. By condition 5 of Definition~\ref{realization conditions}, there is a distinguished point $\mathfrak{a}^+_{k+1}$ in the set $\mathcal{A}^+$. By construction, a point $\mathfrak{b}$ is defined on the circle $C_-$---the intersection point of one of the separatrices of the saddle-node $N$ with the curve $C_-$. Connect the points $\mathfrak{a}^+_{k+1}$ and $\mathfrak{b}$ by a simple smooth curve whose interior lies entirely in $\mathrm{Int} \: U_b$. Denote this curve by $\beta_1$ and cut the annulus $U_b$ along it. We obtain a topological rectangle $\overline{U_b}$. On its lower side $C^+$, the set $\mathcal{A}^+$ is given. Take any two points from the set $\mathcal{A}^+$ belonging to the same equivalence class and connect them by a simple smooth arc, transverse to the curve $C^+$ at its endpoints, so that the interior of the arc lies in $\mathrm{Int}\:U_b$. Fix one point on this arc, distinct from its endpoints; it will be a saddle of the field $v$. The two segments of the arc into which the fixed point divides the original arc will serve as stable separatrices. Do the same for all pairs in turn, following the partial order relation, i.e., first draw arcs for all minimal two-element elements of the order.
\vspace{-5pt}

Now carry out the same steps for the points of the set $\mathcal{L}_2$ belonging to two-element equivalence classes. But now the segments of the arc, into which the fixed point divides the original arc connecting $\mathfrak{b}_i$ to $\mathfrak{b}_j$, will serve as unstable separatrices of the saddle.
\vspace{-5pt}

Each arc divides the rectangle $\overline{U_b}$ into topological disks. Denote the one whose boundary contains the curve $\beta_1$ by $\tilde U_b$. (A possible result after Step 1 is shown in Figure~\ref{example of picture of ralization lemma (conf A, B) Step 1}.)

Step 2. Choose one point inside each disk obtained from the subdivision of the rectangle $\overline{U_b}$, except for the disk $\tilde U_b$. For disks whose boundary contains elements of the set $\mathcal{L}_2$, these points will serve as repellers of the field $v$, and for disks whose boundary contains points of the set $A^+$---as attractors of the field $v$. Connect each saddle on the boundary of a disk to the attractor or repeller inside that disk by a simple smooth curve; these curves will be the unstable or stable separatrices of the saddles, respectively. Now every saddle corresponding to a non-maximal element of the partial order has exactly four separatrices: two stable and two unstable.

\begin{figure}
\center{\includegraphics[scale=0.4]{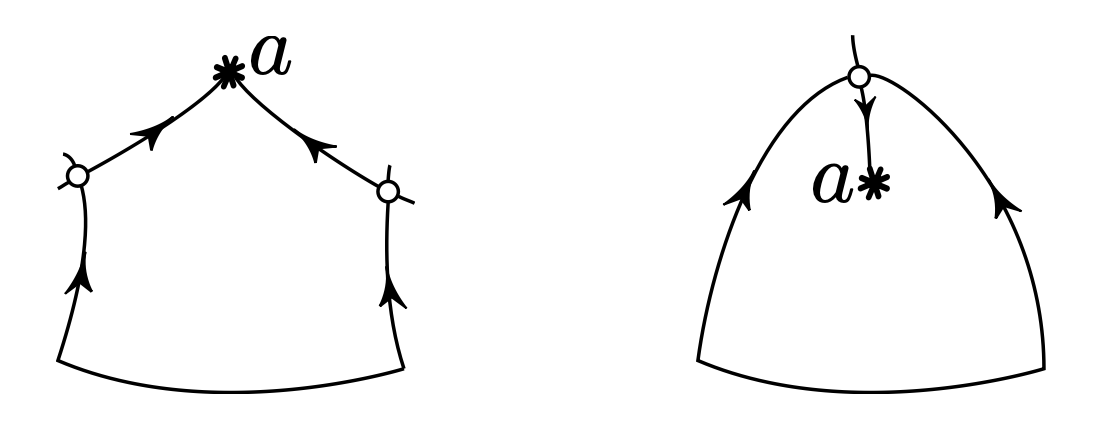}}
\caption{Domains obtained at Step 2, adjacent to the curve $C^+$.}
\label{two types of domains}
\end{figure}

\begin{figure}
\center{\includegraphics[scale=0.4]{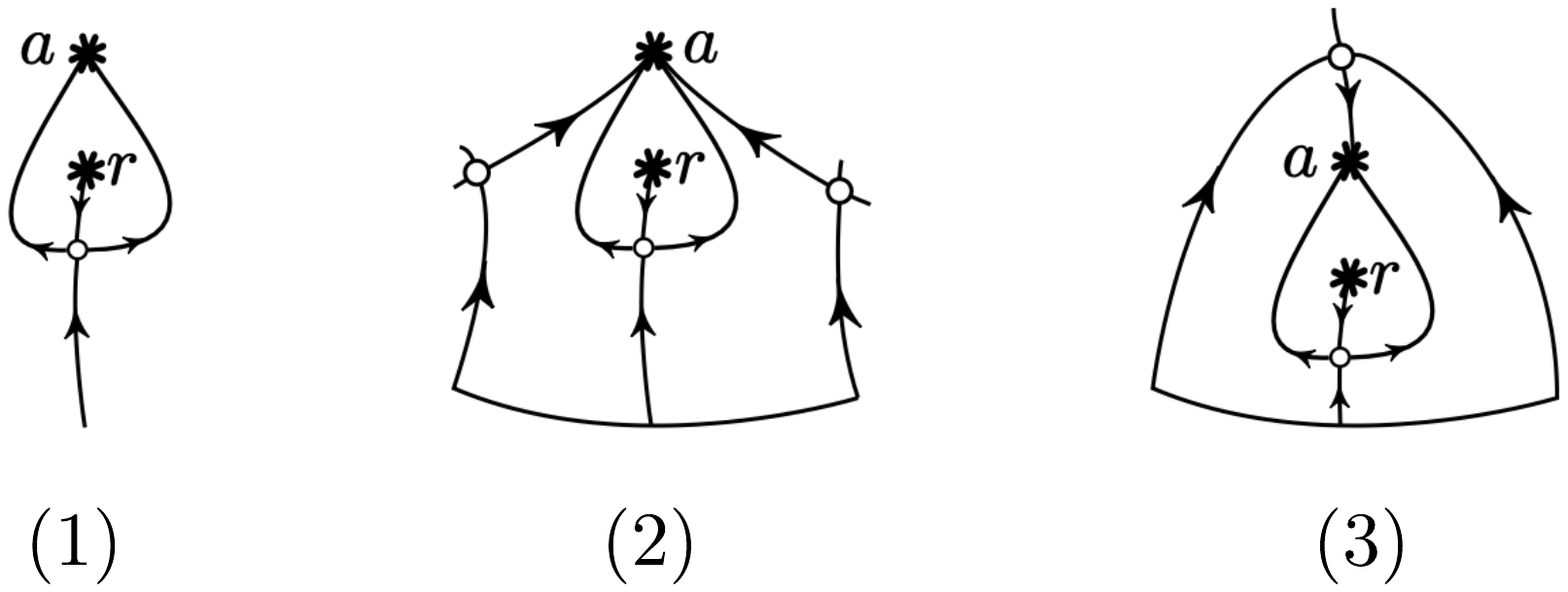}}
\caption{Construction of separatrices corresponding to points of characteristic sets from one-element equivalence classes in $\overline{U_b} \backslash \tilde U_b$.}
\label{step (c) for domains}
\end{figure}

Step 3. The boundaries of all obtained disks, except for the disk $\tilde U_b$, may have one of two forms shown in Figure~\ref{two types of domains}, i.e., they are bounded either by one arc constructed at Step 1, or by two arc segments and curves constructed at Step 2. The directions indicated by arrows in Figure~\ref{two types of domains} can only appear on the boundaries of those domains adjacent to the curve $C^+$. For domains adjacent to the curve $C_-$, the arrows point in the opposite direction. On the curves $C^+$ and $C_-$, take those points of the characteristic sets $\mathcal{A}^+$ and $\mathcal{L}_2$ that lie on the arc segments bounding the domains shown in Figure~\ref{two types of domains}. They all belong to one-element equivalence classes. To those among them that belong to the set $\mathcal{A}^+$, add the graph shown in Figure~\ref{step (c) for domains}.(1) (the asterisk $a$ denotes an attractor, the asterisk $r$---a repeller, and the curves indicate the separatrices of the saddle). The attractor $a$ must coincide with the attractor from the considered domains, as shown in Figures~\ref{step (c) for domains}.(2) and~\ref{step (c) for domains}.(3). To those among these points that belong to the set $\mathcal{L}_2$, add the same graph but with all directions of motion along the edges reversed (and with the attractor $a$ and repeller $r$ swapped), also aligning its position as shown in Figures~\ref{step (c) for domains}.(2) and~\ref{step (c) for domains}.(3).

\begin{figure}
\center{\includegraphics[scale=0.5]{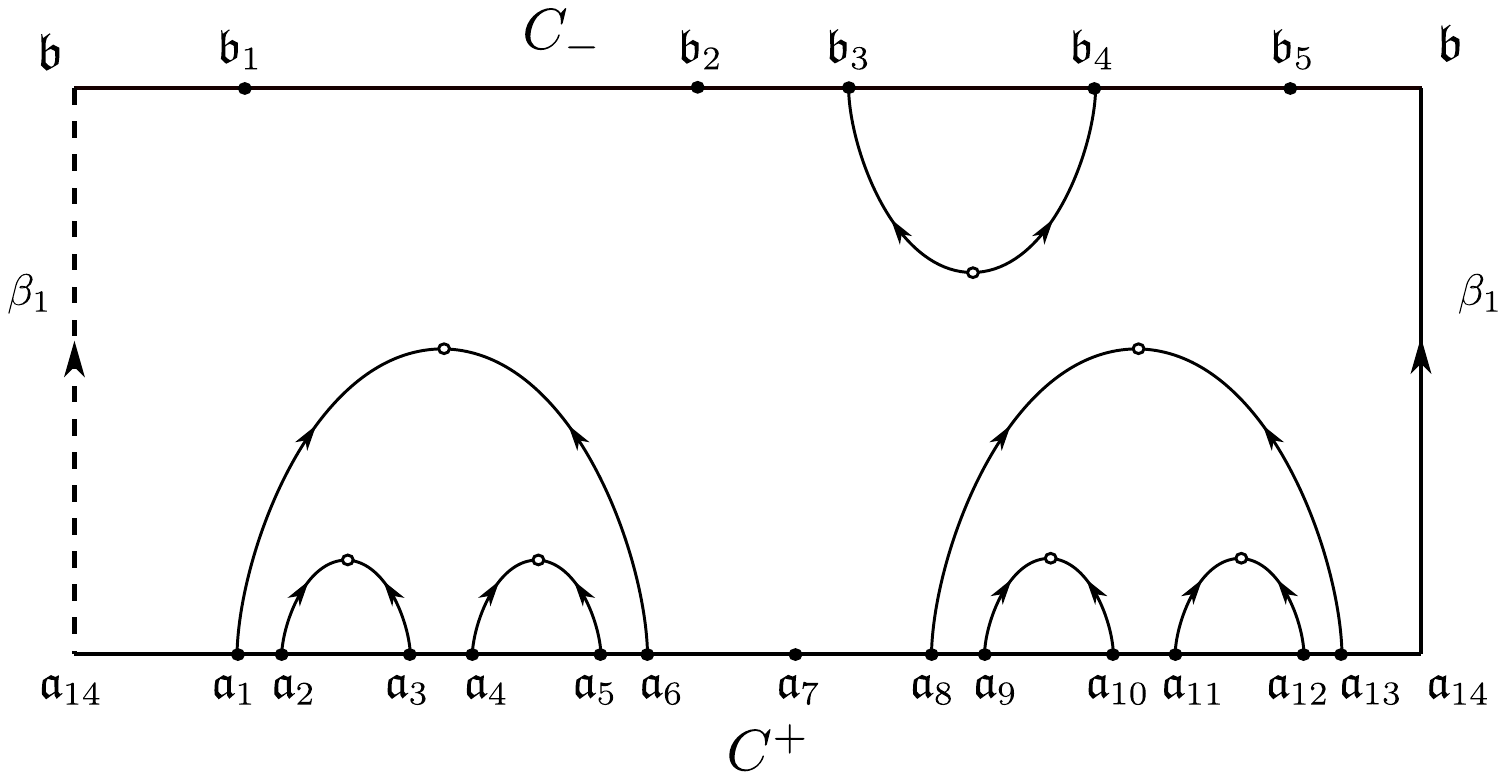}}
\caption{Curves in the rectangle $\overline{U_b}$ obtained after completing Step 1.}
\label{example of picture of ralization lemma (conf A, B) Step 1}
\end{figure}

\begin{figure}
\center{\includegraphics[scale=0.5]{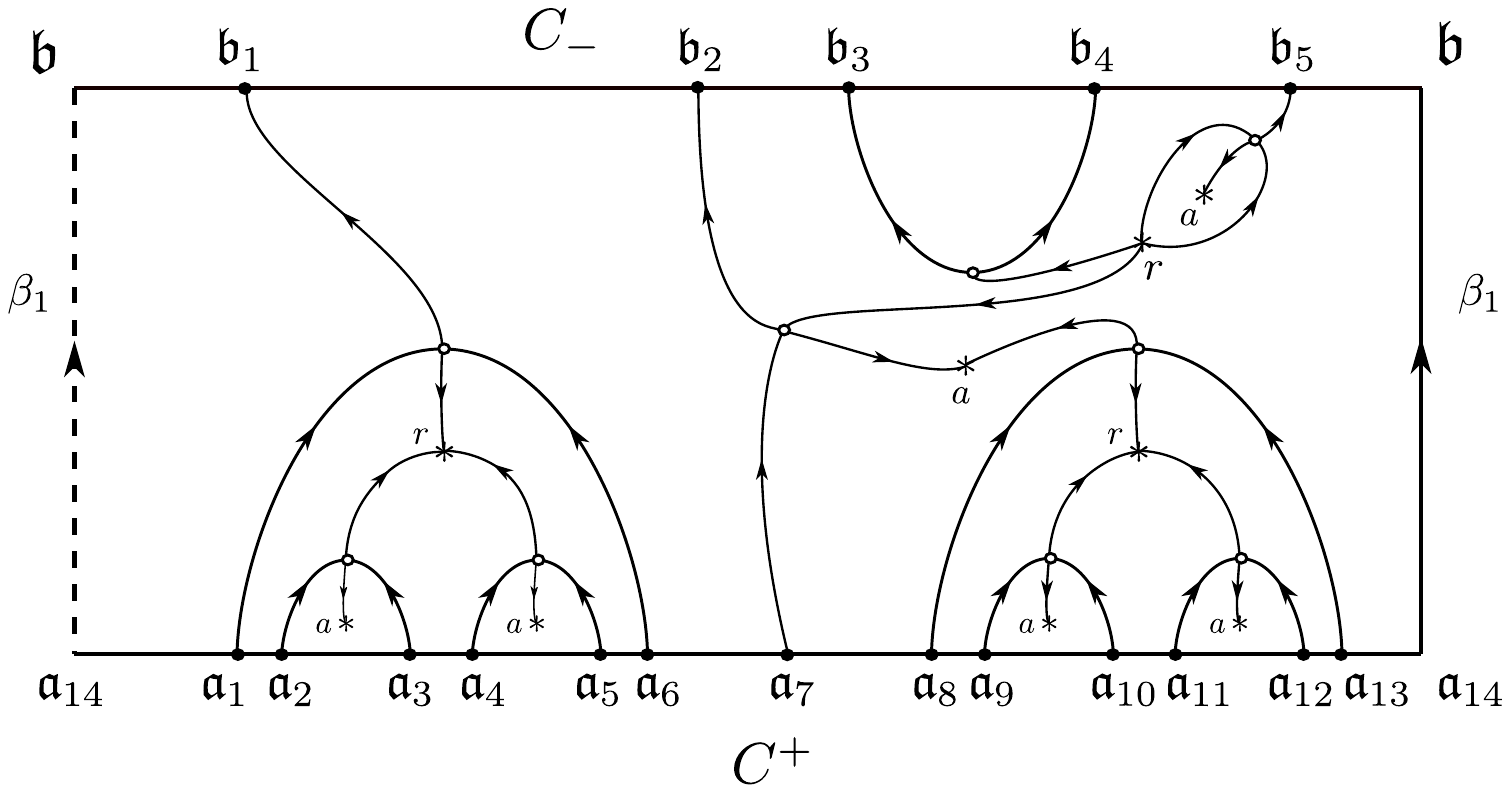}}
\caption{Curves in the rectangle $\overline{U_b}$ obtained after completing Step 4 for configurations 111 and 011.}
\label{example of picture of realization lemma (conf A, B) Step 4}
\end{figure}

Step 4. The set $\tilde U_b$ is homeomorphic to a rectangle by construction. On its boundary lie the points of the characteristic sets $\mathcal{A}^+$ and $\mathcal{L}_2$ belonging to one-element equivalence classes that are maximal elements of the partial order introduced on these characteristic sets. The saddles constructed at Step 1 also lie on the boundary. Draw curves between them, consistent with the liaison relation. Connect each pair in the liaison relation by a smooth simple curve. This is possible by condition 2 of Definition~\ref{realization conditions}. Now, on the curves connecting the one-element equivalence classes from the sets $\mathcal{A}^+$ and $\mathcal{L}_2$, fix points that will serve as saddles, and assign an orientation to the two resulting segments of these curves: the segment connecting the point on the curve $C_-$ to the saddle corresponds to the stable separatrix, and the segment connecting the saddle to the point on the curve $C_+$---to the unstable separatrix (in Figure~\ref{example of picture of realization lemma (conf A, B) Step 4}, the point $\mathfrak{a}_7$ is connected to the point $\mathfrak{b}_2$ by a curve on which a saddle point has been chosen and the corresponding orientations of the segments are indicated). The remaining points related by the liaison relation form pairs in which one point belongs to the set $\mathcal{A}^+$ or $\mathcal{L}_2$, and the other is a saddle constructed at Step 1. Connect the corresponding pairs by simple smooth curves and orient these curves, taking the points on the lower side of the rectangle $\tilde U_b$ as initial points and those on the upper side as terminal points. After this, the rectangle $\tilde U_b$ is subdivided into several small rectangles. For each saddle constructed at this Step 4 (in Figure~\ref{example of picture of realization lemma (conf A, B) Step 4}, such a saddle corresponds to the points $\mathfrak{a}_7$ and $\mathfrak{b}_2$), choose two points in one of the sets bounded by it, making sure they do not lie in the sets bounded by pairs of points in liaison that are forbidden by condition 4 of Definition~\ref{realization conditions}. Designate one of these points as a repeller and the other as an attractor, and connect them to the constructed saddle by curves (which will be the stable and unstable separatrices respectively). To the points of the set $\mathcal{A}^+$ that are not in the liaison relation, add the graph shown in Figure~\ref{step (c) for domains}.(1); to the points of the set $\mathcal{L}_2$ that are also not in the liaison relation, add the same graph but with the edge orientations reversed and with the repeller and attractor swapped (in Figure~\ref{example of picture of realization lemma (conf A, B) Step 4}, this graph is added to the point $\mathfrak{b}_5$). For the remaining saddles, choose one point in their neighborhood, which will serve as a repeller or attractor, connect them to the saddles, and orient the curves accordingly.

This step completes the construction of the separatrices of the field $v$. We can now construct a vector field on the set $U_b$. This can be easily done on the connected components of the complement to the just-constructed separatrices in $U_b$, and the field $v$ can be made infinitely smooth. This completes the construction of the field $v$.

\textbf{Configurations 101 and 001}
\vspace{-5pt}

Step 1. On the circle $C^+$ there is a point $p$ for which condition 2 of Definition~\ref{realization conditions} holds. Moreover, a point $\mathfrak{b}$ is chosen on the circle $C_-$. Connect the points $p$ and $\mathfrak{b}$ by a simple smooth curve and cut the annulus $U_b$ along it, converting it into the rectangle $\overline{U_b}$. Then repeat all actions of Step 1 for configurations 111 and 011.

Steps 2 and 3 are carried over from the construction for configurations 111 and 011 without changes.

Step 4 has a single modification: in the rectangle containing the point $\mathfrak{b}$ on its boundary, after adding the repeller point, we connect it to the point $\mathfrak{b}$ by a smooth curve and call this curve $\beta_1$.

\textbf{Configurations 100 and 000}
\vspace{-5pt}

\begin{figure}
\center{\includegraphics[scale=0.5]{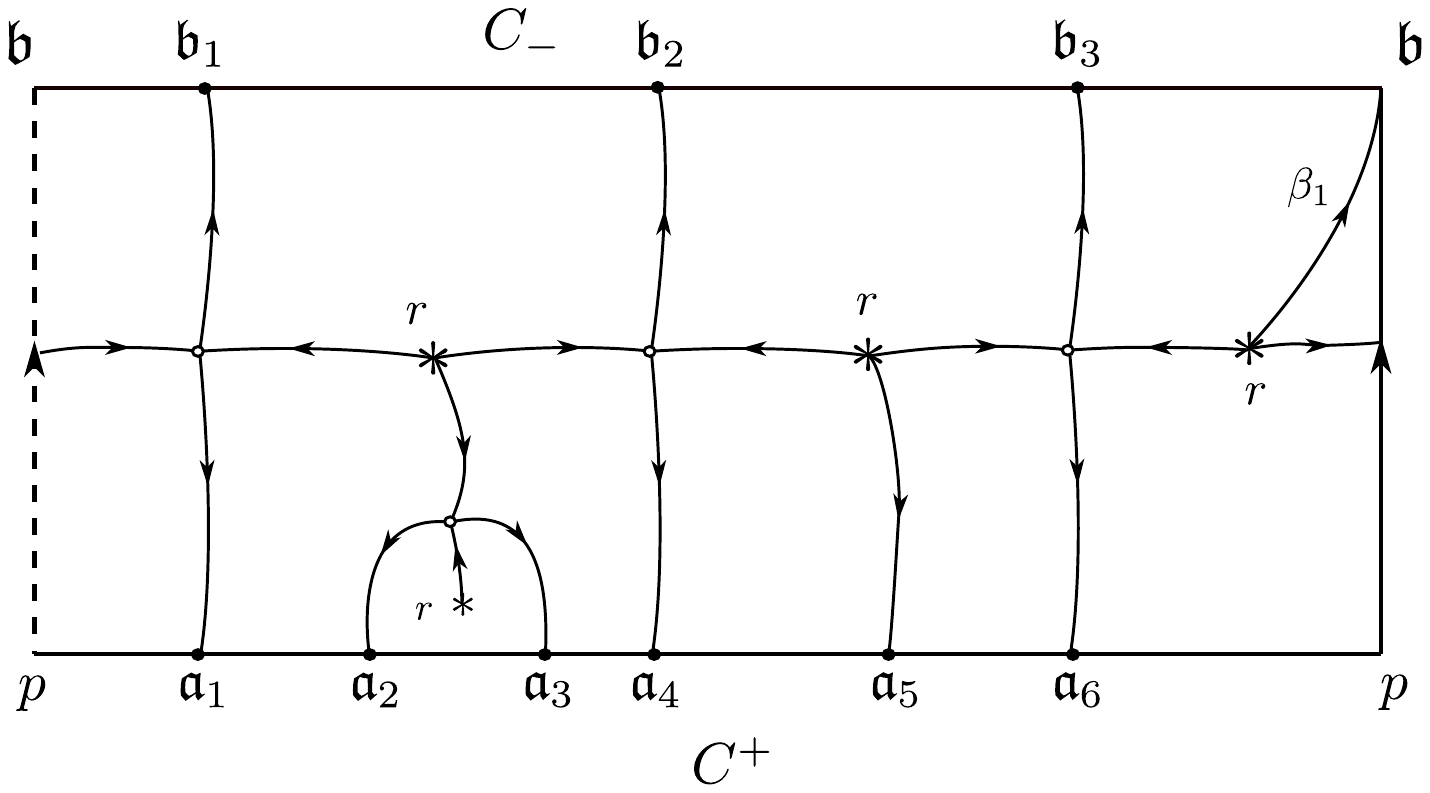}}
\caption{Curves in the rectangle $\overline{U_b}$ obtained after completing Step 4 for configurations 100 and 000.}
\label{example of picture of ralization lemma (conf E, F) Step 4}
\end{figure}

Steps 1, 2, and 3 are carried out analogously to the case of configurations 101 and 001.

Step 4. Connect the one-element equivalence classes in the liaison relation by smooth curves. (Note that configurations 100 and 000 have no two-element equivalence classes in the liaison relation, by condition 6 of Definition~\ref{realization conditions}.) On each such curve, choose one point---it will correspond to a saddle. Orient the segments of the curves emanating from it so that they correspond to the unstable separatrices. The drawn curves subdivide the rectangle $\tilde U_b$ into small rectangles. In each of them, choose a point---this will be a repeller. Connect the remaining saddles and points of the characteristic sets to these repellers. Also draw a curve connecting the point $\mathfrak{b}$ to the repeller lying in the same rectangle as $\mathfrak{b}$. Denote this curve by $\beta_1$. (An example is shown in Figure~\ref{example of picture of ralization lemma (conf E, F) Step 4}.)

Thus, the construction of the field $v$ is complete. \qed

\subsection{Classification of Large Bifurcation Supports}

Combining the obtained results, we can state a general assertion.

Consider the correspondence $\wp$ between two sets:
\begin{enumerate}
	\item[$\mathcal{G}$:] the set obtained by factoring the set of all generic families of the PC-HC class by the equivalence relation defined as follows: $V_1$ is equivalent to $V_2$ if $V_1$ and $V_2$ are moderately equivalent in some neighborhoods of their large bifurcation supports;
	\item[$\mathcal{M}$:] the set consisting of pairs of the form $(a_1a_2a_3, b)$, where $a_1a_2a_3$ is a configuration and $b$ is an equivalence class of realizable characteristic sets with configuration $a_1a_2a_3$ in the sense of Definition~\ref{realization conditions}.
\end{enumerate}

Let the correspondence $\wp$ associate to the equivalence class of a family $V$ (under the relation defining the quotient set $\mathcal{G}$) its configuration and the equivalence class of its characteristic sets.

\thm \label{theorem about bijection between characteristic sets and vector families} The correspondence $\wp$ is a bijection.

\textbf{Proof.} The correspondence $\wp$ is a map, since if two families $V_1$ and $V_2$ are moderately equivalent in neighborhoods of their large bifurcation supports, then their characteristic sets are equivalent and the families have the same configurations; hence the correspondence $\wp$ associates to each element $g \in \mathcal{G}$ exactly one element $m \in \mathcal{M}$.

By Lemma~\ref{Realization lemma}, the mapping $\wp$ is surjective.

By Theorem~\ref{theorem characteristic sets implies moderate equivalence}, the mapping $\wp$ is injective. \qed

\begin{Theorem}
The two large bifurcation supports of PC-HC families $V$ and $\tilde V$ with equivalent configurations and characteristic sets are mapped to each other by a homeomorphism of the sphere.
\end{Theorem}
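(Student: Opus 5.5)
The plan is to obtain the homeomorphism from the moderate equivalence given by Theorem~\ref{theorem characteristic sets implies moderate equivalence}, and then extend it to the whole sphere. Since $V$ and $\tilde V$ have the same configuration and equivalent characteristic sets, that theorem gives a neighborhood $U$ of $LBS(V)$ and a map $\mathbf{H}=(h,\mathbf{H}_\mu)$ as in Definition~\ref{moderate equivalence near closed subsets definition}. In particular, $\mathbf{H}_0$ is an orbital topological equivalence of $\left.v_0\right|_U$ and $\left.\tilde v_0\right|_{\mathbf{H}_0(U)}$ with $\mathbf{H}_0(LBS(V))=LBS(\tilde V)$. By the proposition that $G\subset\overline{Sep\,V}$, we have $LBS=ELBS(v_0)$. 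It is therefore enough to extend $\mathbf{H}_0$ from a smaller closed neighborhood $U'\subset U$ of $LBS(V)$ to a homeomorphism of $S^2$.

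Concretely, I will use the explicit neighborhood from Section~\ref{subsection about choice of neighbourhood of LBS}. Its boundary consists of finitely many arcs $\alpha_i$ and $\sigma_i$, which are transverse to $v_0$. The map $\mathbf{H}_0$ sends these boundary curves to the analogous curves $\tilde\alpha_i,\tilde\sigma_i$ bounding $\tilde U$, because the map was built by matching entry and exit points along trajectories. Hence $\mathbf{H}_0$ restricts to a homeomorphism $\partial U\to\partial\tilde U$.

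Next I analyze the complement $S^2\setminus \mathrm{Int}\,U$. Its connected components are compact surfaces bounded by the closed curves made of the $\alpha_i,\sigma_i$ together with pieces of transversals. The key step is to show that $\mathbf{H}_0$ induces a bijection between the components of $S^2\setminus\mathrm{Int}\,U$ and those of $S^2\setminus\mathrm{Int}\,\tilde U$, and that corresponding components have the same topological type. Since the components lie in the sphere, each is a disk with finitely many holes, so its type is determined by the number of boundary circles.

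For this I will use that $\mathbf{H}_0$ is an orientation-preserving homeomorphism of neighborhoods $U\to\tilde U$ whose boundary circles correspond. A complementary component $K$ is determined by the set of boundary circles of $U$ adjacent to it, and adjacency can be read off from the combinatorics of $U$. A Jordan-curve argument on $S^2$ will do this: two boundary circles bound a common complementary component iff they are not separated by any closed curve lying in $U$. That separation property is preserved by $\mathbf{H}_0$, because $\mathbf{H}_0$ maps closed curves in $U$ to closed curves in $\tilde U$ and is orientation preserving.

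Once corresponding components are known to be planar surfaces with the same number of boundary circles, and the boundary homeomorphism respects orientation, I will extend $\mathbf{H}_0|_{\partial K}$ to a homeomorphism $K\to\tilde K$. This uses the classification of compact planar surfaces: first cap off with disks, then apply the Alexander trick on each disk. Gluing these extensions to $\left.\mathbf{H}_0\right|_{U}$ gives a homeomorphism of $S^2$ that maps $LBS(V)$ onto $LBS(\tilde V)$.

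The main obstacle is the correspondence of complementary components. It is cleaner to argue directly from the explicit structure of $U$, namely the annuli $U_\gamma$ and $U_{\gamma_h}$ and the tree-like attachments $U_{a_i},U_{b_j},\dots$, where the same configuration fixes the relative position of $\overline{\gamma}_h$ and $\gamma$. I will try first to show that $U$ is homotopy equivalent to a planar graph whose embedding is determined by the configuration and the characteristic sets, and then transfer the embedding.
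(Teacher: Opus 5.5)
\textbf{Verdict: genuine gap.} Your route is the paper's: take $\mathbf{H}_0$ from the moderate equivalence near the large bifurcation supports and extend it over the components of $S^2\setminus U$. The paper computes these components explicitly: they are disks, plus one annulus $R$ separating the two pieces of $U$ when $LBS(V)$ is disconnected. You are right that matching complementary components is the crux, but your argument for it fails. Whether a closed curve in $U$ separates two boundary circles of $U$ in $S^2$ is intrinsic to $U$ only when both circles lie on the same component of $U$. If $U=U^1\sqcup U^2$, with $U^1\supset\overline{\gamma}_h$ and $U^2\supset\gamma$, then which boundary circle of $U^1$ faces $U^2$ depends on the embedding. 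An orientation-preserving homeomorphism $U\to\tilde U$ that maps closed curves to closed curves, even an orbital equivalence, has no reason to respect it. When $U$ is connected nothing needs matching, since every component of $S^2\setminus\mathrm{Int}\,U$ is then a disk. So everything hinges on showing that $\mathbf{H}_0$ carries $\partial R$ onto $\partial\tilde R$ in the disconnected case.

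The stated hypotheses do not force this. Here is a counterexample.
\begin{itemize}
\item Let a saddle $B$ on the $\gamma$-side of $\overline{\gamma}_h$ have both unstable separatrices $b_1,b_2$ tending to $N$, so that $\ell=N\cup b_1\cup B\cup b_2$ is a loop.
\item Allow no other saddles, so $\mathcal{A}^\pm=\emptyset$ and there is no liaison. Take $a_2=0$, $a_3=1$.
\item Surround $\gamma$ by an attracting and then a repelling hyperbolic cycle, so that $G=\emptyset$.
\item For $V$, put this nest inside the disk bounded by $\ell$, and a source in the region bounded by both $\overline{\gamma}_h$ and $\ell$. For $\tilde V$, do the opposite.
\end{itemize}
The configurations and characteristic sets coincide, and Theorem~\ref{theorem characteristic sets implies moderate equivalence} gives $\mathbf{H}_0$. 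But $\mathbf{H}_0$ maps the boundary circle of $U$ running inside $\ell$, which faces $U_\gamma$ across $R$, to the circle running inside $\tilde\ell$, which bounds a disk. Worse, no homeomorphism of $S^2$ maps $LBS(V)=(\overline{\gamma}_h\cup\ell)\sqcup\gamma$ onto $LBS(\tilde V)$. Indeed, $\gamma$ lies in a complementary region of the wedge $\overline{\gamma}_h\cup\ell$ bounded by one loop, while $\tilde\gamma$ lies in the region bounded by both loops.

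So neither your separation argument nor your fallback can be completed. The fallback fails because the embedded graph is not determined by the configuration and characteristic sets. The paper's proof passes over the same point: it extends only over disks and never checks that $R$ is matched. You need an additional hypothesis. One option is that $\mathbf{H}_0$ is the restriction of an orbital equivalence of $v_0$ and $\tilde v_0$ on all of $S^2$, as in Theorem~\ref{theorem about conjugation of unpertubed fields implies weak equivalence}. Another is an extra invariant recording which complementary region of $\overline{\gamma}_h\cup\bigcup_j(B_j\cup b_j)$ contains $\gamma$. Under such a hypothesis, handle the annulus by extending an orientation-compatible pair of circle homeomorphisms, using that the group of orientation-preserving circle homeomorphisms is connected. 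Capping off and applying the Alexander trick, as you propose, does not extend a map over a surface with two boundary circles.
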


\textbf{Proof.} By Theorem~\ref{theorem about bijection between characteristic sets and vector families}, there exists a moderate equivalence mapping $\mathbf{H}$ of the families $V$ and $\tilde V$ in neighborhoods $U$ and $\tilde U$ of their large bifurcation supports. At the zero value of the parameter, this mapping $\mathbf{H}_0$ is a homeomorphism of the neighborhoods $U$ and $\tilde U$ sending the set LBS($V$) to the set LBS($\tilde V$). We show that the mapping $\mathbf{H}_0$ can be extended to a homeomorphism of the entire sphere.

If the large bifurcation support LBS($V$) is disconnected, it consists of two connected components. The first component contains the saddle-node $N$, its homoclinic curve $\gamma_h$, as well as the saddles $A_i$ and $B_j$ and their separatrices $a_i$ and $b_j$. Note that this set is topologically a wedge of finitely many circles and intervals with a marked point $N$. Hence the neighborhood of this set, consisting of the domains $U_N$, $U_{\gamma_h}$, $U_{A_i}$, $U_{a_i}$, $U_{B_j}$, $U_{b_j}$ constructed in Section~\ref{subsection about choice of neighbourhood of LBS}, is a topological space obtained by gluing together topological annuli (neighborhoods $U_{a_i} \cup U_{A_i} \cup U_{a_j} \cup U_N$ of pairs of separatrices $a_i$ and $a_j$ emanating from the same saddle $A_i$, and the neighborhood $U_{\gamma_h} \cup U_N$ of the curve $\gamma_h$) and topological disks (neighborhoods of the form $U_{a_i} \cup U_N$ of single separatrices $a_i$) along the set $U_N$. The complement of such a set is a finite collection of disks $\mathcal{D}_1, \ldots, \mathcal{D}_e$. Similarly, for the neighborhood of the second connected component of the large bifurcation support LBS($V$), consisting of the parabolic cycle $\gamma$ and the separatrices $c_i$ and $d_j$ with saddles $C_i$ and $D_j$, one can show that this neighborhood is also represented as a gluing of finitely many topological annuli and disks. Hence the complement of this neighborhood is also a finite collection of topological disks $\mathcal{D}^1, \ldots, \mathcal{D}^f$. The second connected component of the set LBS($V$) lies in one of the disks $\mathcal{D}_g$, and the first connected component lies in one of the disks $\mathcal{D}^h$. Hence, among the pairwise intersections of the disks $\mathcal{D}_i \cap \mathcal{D}^j$, the only non-trivial one is the intersection of the specified pair: $\mathcal{D}_g \cap \mathcal{D}^h$. This intersection is a topological annulus $R$. Thus the set $S^2 \setminus U$ is the disjoint union of a topological annulus and a finite number of topological disks.

For the case of a connected large bifurcation support LBS($V$), the argument is analogous. The only addition is that neighborhoods of the form $U_{b_i} \cup U_{B_i} \cup U_{c_j}$ of separatrices $b_i$ and $c_j$ emanating from the same saddle $B_i$, or the neighborhood $U_{\beta_2}$ of the stable separatrix $\beta_2$ of the saddle-node $N$, lie in the topological annulus $R$, connecting its boundary circles. And a topological annulus from which neighborhoods of curves connecting its boundary circles are removed is homeomorphic to a finite collection of disks. Hence the set $S^2 \setminus U$ consists only of topological disks.

By the construction in Section~\ref{subsection about moderate equivalence in neighbourhoods of LBS}, the mapping $\mathbf{H}_0$ is defined in the neighborhood $U$ of the large bifurcation support and extends to its boundary $\partial U$. Since any homeomorphism of the boundary circles of two disks can be extended to a homeomorphism between these disks, and the complement of the set $U$ is a union of finitely many disks, we conclude that the mapping $\mathbf{H}_0$ extends to a homeomorphism on each of these disks, and therefore to a homeomorphism of the entire sphere. \qed

\textbf{Proof of Theorem~\ref{theorem about equivalence of bifurcation diagrams}.} If two generic PC-HC families $V$ and $\tilde V$ have the same configurations and equivalent characteristic sets, then by Theorem~\ref{theorem characteristic sets implies moderate equivalence} there exists a moderate equivalence mapping $\mathbf{H}$ in neighborhoods of their large bifurcation supports. If the field $v_{\varepsilon_0, \lambda_0}$ in the family $V$ is not structurally stable, i.e., it has either a parabolic cycle, or a saddle-node, or a saddle connection between saddles $C_i$ and $D_j$, then the corresponding field $\tilde v_{h(\varepsilon_0, \lambda_0)}$ from the family $\tilde V$ will have the same degeneracy, since the mapping $\mathbf{H}_{\varepsilon_0, \lambda_0}$ is an orbital topological equivalence and preserves the said degeneracies. Thus the mapping $h = \left. \mathbf{H} \right|_B$ sends the bifurcation values of the parameter of the family $V$ to the bifurcation values of the parameter of the family $\tilde V$; and since $h$ itself is a homeomorphism of the parameter spaces, it provides a homeomorphism between the bifurcation diagrams of the families $V$ and $\tilde V$. \qed

\section{Bifurcation Scenario}
\label{chapter about bifurcation diagrams}

\subsection{Separatrix Connections}

In this section we describe the qualitative picture of bifurcations in generic PC-HC families. The key phenomenon is the appearance of an infinite sequence of saddle connections---which we call sparkling saddle connections---upon the breakdown of the parabolic cycle $\gamma$, i.e., as the parameter $\varepsilon \to 0+$. Our goal is to show that the order in which these connections appear is completely determined by the characteristic sets $A^+$ and $A^-$ introduced in Section~\ref{subsection about characteristic sets}.

Recall that we introduced the parameters $(\varepsilon, \lambda)$ so that the parameter $\varepsilon$ corresponds to the bifurcation of the parabolic cycle $\gamma$, and the parameter $\lambda$ to the bifurcation of the saddle-node $N$. For $\varepsilon<0$, the cycle splits into two hyperbolic limit cycles, while for $\varepsilon>0$ the cycle disappears. Similarly, negative values of the parameter $\lambda$ correspond to the splitting of the saddle-node into a saddle and a topological node, while positive values correspond to the disappearance of the saddle-node and the birth of a stable hyperbolic limit cycle.

Consider the bifurcation scenario in families with configurations with $a_2 = 1$ and $a_3 = 1$. Upon the disappearance of the limit cycle $\gamma$, the separatrices $d_i$ tending to it in positive time may connect with the separatrices $c_j$ tending to the cycle in negative time, or with the unstable separatrix $\beta_1$ of the saddle-node. As the parameter $\varepsilon$ changes, these saddle connections may break and form new ones with other separatrices. We describe a sufficient condition for the appearance of such saddle connections.

We derive the saddle connection equation.
Let, as in Section~\ref{subsection about characteristic sets}, the points $b^-$ and $b^+$ be fixed on the transversal $\Gamma_2$ and the time coordinates $T^-$ and $T^+$ be defined relative to them (i.e., the quantity $T^{\pm}(x)$ corresponds to the ``time of motion'' along the curve $\Gamma_2$ from the point $b^{\pm}$ to the point $x$; the formal construction is described in~\cite{one-parameter family proof 2}). In these coordinates, the Poincaré map $\mathcal{P}$ coincides with the unit time shift. They are defined for all values of the parameter $\mu = (\varepsilon, \lambda)$ from a neighborhood of the origin. As before, $\mathfrak{c}_i = c_i \cap [b^+, \mathcal{P}(b^+))$ and $\mathfrak{d}_j = d_j \cap [b^-, \mathcal{P}(b^-))$, and moreover $\mathfrak{c}_{k+1} = \beta_1 \cap [b^+, \mathcal{P}(b^+))$.

Let
$$
\tau(\varepsilon, \lambda) = T^-_{\varepsilon, \lambda}(b^+).
$$
This quantity is well defined for $\varepsilon > 0$ and tends to $+\infty$ as $\varepsilon \to 0+$. Informally, $\tau(\varepsilon, \lambda)$ is the ``time'' it takes for the point $b^-$ to reach the point $b^+$ while moving along the curve $\Gamma_2$ in accordance with the Poincaré map $\mathcal{P}_{\varepsilon, \lambda}$.

A saddle connection appears if the image of the intersection point of the separatrix $d_j$ with the semi-interval $[b^-, \mathcal{P}(b^-))$ (the point $\mathfrak{d}_j$) under some iterate of the map $\mathcal{P}_{\varepsilon, \lambda}$ coincides with the intersection point of the separatrix $c_i$ with the semi-interval $[b^+, \mathcal{P}(b^+))$ (the point $\mathfrak{c}_i$) for some $i$ and $j$:
$$
\mathcal{P}^{m}_{\varepsilon, \lambda}(\mathfrak{d}_j) = \mathfrak{c}_i, \quad \mbox{where} \;\; m \in \mathbb{N}.
$$
Taking into account the fact that in the coordinates $T^{\pm}$ the Poincaré map $\mathcal{P}_{\varepsilon, \lambda}$ is a shift, we can rewrite the saddle connection equation as:
\begin{equation}
\label{connection equation}
T^+(\mathfrak{c}_i) - T^-(\mathfrak{d}_j) = \tau(\varepsilon, \lambda) + m, \quad m \in \mathbb{Z}.
\end{equation}

The properties of the roots of the saddle connection equation in the one-dimensional case (without dependence on the parameter $\lambda$) are studied in~\cite{one-parameter family proof 2}. We use these results to describe the behavior of the solutions of equation~(\ref{connection equation}) for fixed values of the parameter $\lambda = \lambda_0$.

\prop \label{existance of roots of connection equation} \cite{one-parameter family proof 2} For fixed values $\lambda = \lambda_0$, equation~(\ref{connection equation}) has exactly one solution $\varepsilon = \varepsilon_{mij}$ for sufficiently large $m$.

Consider the value $\varepsilon_{mij}$ corresponding to the solution of equation~(\ref{connection equation}) for $\mathfrak{c}_i$, $\mathfrak{d}_j$ with fixed number $m$. This value corresponds to the appearance of a saddle connection between the separatrices $c_i$ and $d_j$.
The index $i$ can take values from 0 to $k+1$, the index $j$ takes values from 0 to $n$ for $\lambda_0 > 0$. For non-positive values of $\lambda_0$, the separatrix $\beta_1$ of the saddle-node $N$ persists and also forms connections with the separatrices $d_i$, which is reflected by the index value $i = k+1$.

Consider the order in which the bifurcation values of the parameter $\varepsilon$ (i.e., the numbers $\varepsilon_{mij}$ corresponding to saddle connections) appear as the parameter approaches zero.

\prop \label{proposition about order of bifurcation parameter} \cite{one-parameter family proof 2} For a fixed value $\lambda = \lambda_0$, the roots of equation~(\ref{connection equation}) are arranged in such an order that for large $m \in \mathbb{N}$ and for any indices $i,\: i' \in \{1, \ldots, k+1\}$ and $j,\: j' \in \{1, \ldots, n\}$, the relation $\varepsilon_{(m+1)ij} < \varepsilon_{mij}$ holds. (For non-positive values of $\lambda_0$, the indices $i$ and $i'$ take values from 0 to $k+1$.) Moreover, for sufficiently large $m$, the order on the set $\{\varepsilon_{mij}\}_{ij}$ coincides with the order on the set $\{\mathfrak{a}^+_i - \mathfrak{a}^-_j\}_{ij}$, i.e., the following implication holds: if for some $i$, $i'$, $j$, and $j'$ we have $\mathfrak{a}^+_i-\mathfrak{a}^-_j < \mathfrak{a}^+_{i'}-\mathfrak{a}^-_{j'}$, then $\varepsilon_{mij}<\varepsilon_{mi'j'}$.

\prop \label{proposition about shift in bifurcation paratameter} \cite{one-parameter family proof 2} For fixed values of $\lambda$, shifting the points $b^+$ and $b^-$ leads to a shift in the $m$-indexing of the set $\{\varepsilon_{mij}\}$; a shift $m \mapsto m + M$ by any $M \in \mathbb{N}$ can be achieved by an appropriate choice of $b^+$ and $b^-$.

Thus, as the parameter $\varepsilon$ tends to zero, saddle connections appear, and the sequence of their appearance coincides with the sequence of elements in the set $\{\mathfrak{a}^+_i - \mathfrak{a}^-_j\}_{ij}$. If at parameter values $(\varepsilon_{m'ij}, \lambda_0)$ some saddle connection makes $m$ turns in a neighborhood of the cycle $\gamma$ (with the cycle itself absent), then at parameter values $(\varepsilon_{(m'+1)ij}, \lambda_0)$ the saddle connection of the same $\alpha$- and $\omega$-limit sets makes $m+1$ turns in the same neighborhood of the limit cycle. This is a cyclically repeating scenario (Dehn twist).

As the parameter $\lambda$ changes in a neighborhood of the saddle-node $N$, the usual saddle-node bifurcation takes place. For non-positive values of $\lambda$, one stable separatrix $\beta_1$ is preserved, whose $\omega$-limit set is either the saddle-node $N$ or the saddle arising from the splitting of the saddle-node into a saddle and a node. If the family $V$ has configurations with $a_2=1$, then the separatrix $\beta_1$ also forms sparkling saddle connections with the separatrices $d_i$ tending to the limit cycle in forward time, as described by the argument above. If the family $V$ has other configurations, then the separatrix $\beta_1$ does not participate in forming sparkling saddle connections. For any configuration, when the parameter $\lambda$ takes positive values, the saddle-node $N$ disappears and instead of the homoclinic curve $\gamma_h$ a hyperbolic limit cycle is born. All separatrices $a_i$ and $b_j$ that previously tended to the saddle-node $N$ begin to wind onto the new limit cycle $\gamma_h$; the separatrix $\beta_1$ disappears, and the bifurcations in the neighborhood of the parabolic cycle $\gamma$ involve only the separatrices $c_i$ and $d_j$.

From the property described in Proposition~\ref{existance of roots of connection equation}, it follows that the set $G$ belongs to the large bifurcation support:

\textbf{Proof of Proposition~\ref{Set G in SepV}.} Consider a point $g$ on the transversal $\Gamma_2$ belonging to the set $G$. A separatrix passes through this point if for some $j$ the equation
$$
T^+(g) - T^-(\mathfrak{d}_j) = \tau(\varepsilon, \lambda) + m, \quad m \in \mathbb{Z}
$$
is satisfied.
By Propositions~\ref{existance of roots of connection equation} and~\ref{proposition about order of bifurcation parameter}, this equation has a solution $\varepsilon_m(g)$ for any fixed sufficiently large $m$. Moreover, these solutions decrease monotonically to zero as a function of $m$. Hence, in any neighborhood of the origin $(\mathbb{R}, 0)$ of the parameter $\varepsilon$, there exists a value $\varepsilon_m(g)$ such that the corresponding vector field has a separatrix of one of the saddles $D$ passing through the point $g$. Therefore $g \in \overline{Sep \; V}$. \qed

\subsection{Simple Bifurcation Diagram}
\label{subsection with bifurcation diagrams}

\begin{figure}
\center{\includegraphics[scale=0.6]{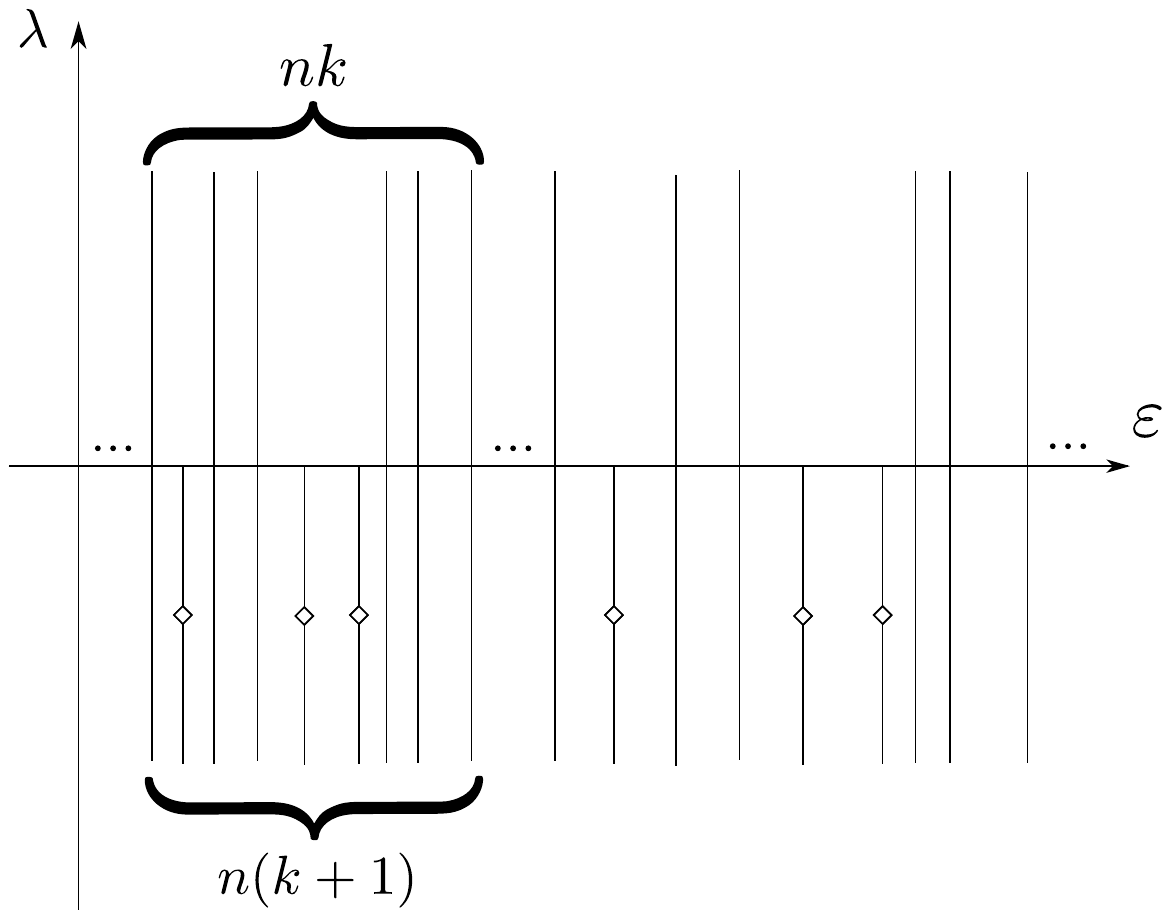}}
\caption{Simple bifurcation diagram of a PC-HC family with configurations with $a_2 = 1$. The small square marks the germ of a curve corresponding to the appearance of a saddle connection involving the separatrix $\beta_1$ of the saddle-node $N$. Here $n$ is the number of separatrices $d_j$ and $k$ is the number of separatrices $c_i$.}
\label{bifurcation diagram for A B}
\end{figure}

In addition to the results obtained in the previous section, we need to consider the dependence of the formation of saddle connections on the parameter $\lambda$.

First consider the family $V$ with configurations with $a_2 = 1$.

For $\lambda < 0$, the saddle-node $N(\varepsilon, 0)$ splits into the saddle $S_0(\varepsilon, \lambda)$ and the node $N_0(\varepsilon, \lambda)$. The separatrix $\beta_1(\varepsilon, \lambda)$, depending smoothly on the parameters, tends to the saddle $S_0(\varepsilon, \lambda)$; it intersects the interval $[b^+, \mathcal{P}(b^+))$ of the transversal $\Gamma_2$ at the point $\mathfrak{a}^+_{k+1}(\varepsilon, \lambda)$. The separatrices $c_i$ tending to the limit cycle in backward time intersect the interval $[b^+, \mathcal{P}(b^+))$ at the points $\mathfrak{a}^+_{i}(\varepsilon, \lambda)$, where $i \le k$. As $(\varepsilon, \lambda) \to 0$, the points $\mathfrak{a}^+_{i}(\varepsilon, \lambda)$ tend to the points $\mathfrak{a}^+_{i}$ of the characteristic set $\mathcal{A}^+$. Similarly, the separatrices $d_j$ tending to the limit cycle $\gamma$ in positive time intersect the interval $[b^-, \mathcal{P}(b^-))$ of the transversal $\Gamma_2$ at the points $\mathfrak{a}^-_{j}(\varepsilon, \lambda)$. The condition for the appearance of a saddle connection between the separatrices $c_i$ and $d_j$ can then be written as
$$
a^+_i(\varepsilon, \lambda) = \mathcal{P}^m_{\varepsilon, \lambda}(a^-_j(\varepsilon, \lambda)),
$$
where $\mathcal{P}_{\varepsilon, \lambda}$ is the Poincaré map in a neighborhood of the cycle $\gamma$. This is equivalent to equation~(\ref{connection equation}). By Proposition~\ref{existance of roots of connection equation}, for large $m$ it has a unique solution $\varepsilon_{mij}$ for each fixed $\lambda$, i.e., it is a function of $\lambda$: $\varepsilon = \psi_{mij}(\lambda)$. Since all terms of equation~(\ref{connection equation}) are smooth in $\lambda$ (for $\lambda<0$), the function $\psi_{mij}(\lambda)$ is also smooth in $\lambda$ and tends to $\varepsilon_{mij}$ as $\lambda \to 0-$.

For $\lambda > 0$, the separatrix $\beta_1(\varepsilon, \lambda)$ disappears and only the points corresponding to the separatrices $c_i(\varepsilon, \lambda)$ remain in the set $\{\mathfrak{a}^+_{i}(\varepsilon, \lambda)\}$. An analogous argument shows that the solutions $\varepsilon = \psi_{mij}(\lambda)$ of equation~(\ref{connection equation}) depend smoothly on the parameter $\lambda$ and tend to $\varepsilon_{mij}$ as $\lambda \to 0+$.

\begin{figure}
\center{\includegraphics[scale=0.6]{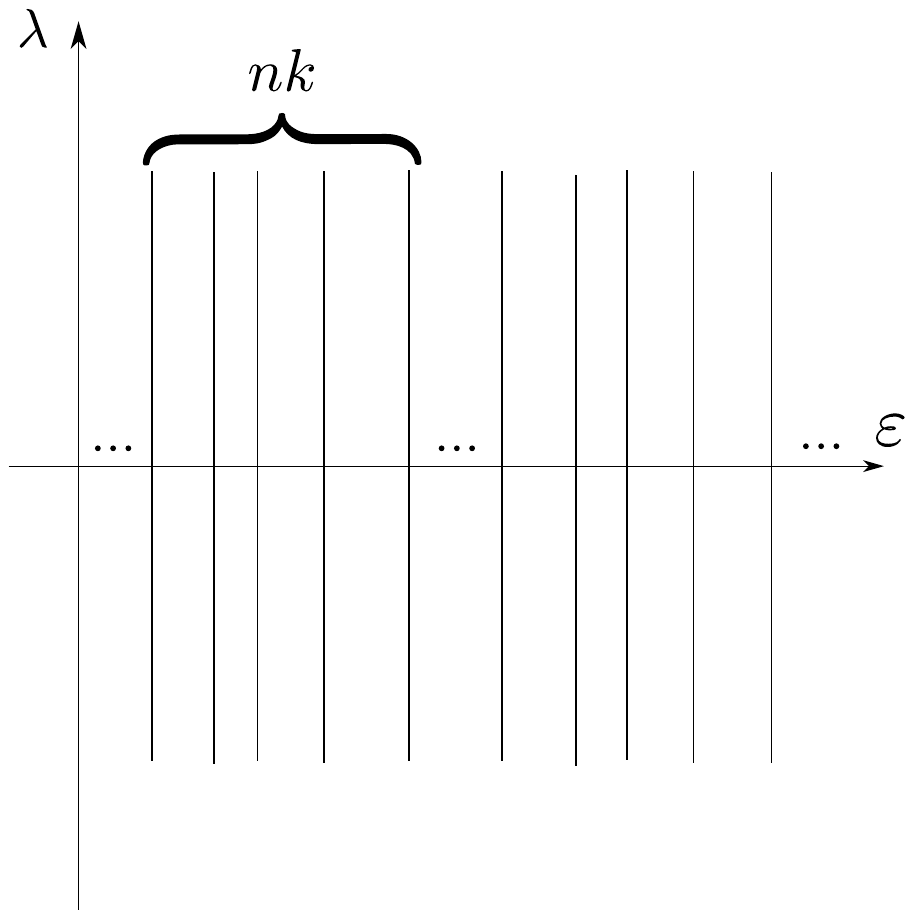}}
\caption{Simple bifurcation diagram of a PC-HC family with configurations with $a_2 = 0$. Here $n$ is the number of separatrices $d_j$ and $k$ is the number of separatrices $c_i$.}
\label{bifurcation diagram for C D E F}
\end{figure}

Thus, the simple bifurcation diagram of a PC-HC family for configurations with $a_2=1$ is the union of the germs of the coordinate axes $\lambda=0$ and $\varepsilon=0$, together with a sequence of germs of planar graphs for $\varepsilon > 0$: near the point $(\varepsilon_s, 0)$, the bifurcation diagram is homeomorphic to the germ of a graph at a vertex from which either four edges emanate (if the point $(\varepsilon_s, 0)$ corresponds to a saddle connection between saddles $C_i$ and $D_j$), or three edges (if the point $(\varepsilon_s, 0)$ corresponds to a saddle connection between the stable separatrix $\beta_1$ of the saddle-node $N$ and saddles $D_j$). In this case, for every one vertex with three edges there are $k$ vertices with four edges. The sequence in which the vertices alternate repeats with period $n(k+1)$ vertices (the germs of the bifurcation diagram in neighborhoods of the points $(\varepsilon_s, 0)$ and $(\varepsilon_{s+n(k+1)}, \lambda)$ are homeomorphic). Such a diagram for the case $n=3$ and $k=2$ is shown in Figure~\ref{bifurcation diagram for A B}.

Now consider the family $V$ with configurations with $a_2 = 0$.

In this case, the separatrix $\beta_1$ does not wind off the limit cycle $\gamma$ and does not form sparkling connections with the separatrices $d_j$. The bifurcation in the neighborhood of the saddle-node $N$ proceeds in the same way as in the case of configurations with $a_2 = 1$. In the neighborhood of the parabolic cycle $\gamma$, the condition for the appearance of sparkling saddle connections, as before, is described by equation~(\ref{connection equation}) for the points of the sets $\mathcal{A}^+$ and $\mathcal{A}^-$, where the set $\mathcal{A}^+$ has exactly $k$ elements $\mathfrak{a}^+_i$, each corresponding to the separatrix $c_i$. The solutions of this equation are smooth in the parameter $\lambda$.

We have thus found that the simple bifurcation diagram of a PC-HC family for configurations with $a_2 = 0$ is the union of the germs of the coordinate axes $\lambda=0$ and $\varepsilon=0$, together with a sequence of germs of planar graphs for $\varepsilon > 0$: near the point $(\varepsilon_s, 0)$, the bifurcation diagram is homeomorphic to the germ of a graph at a vertex from which four edges emanate. In other words, the bifurcation diagram in this case is homeomorphic to the diagram of a direct sum of bifurcations. Such a diagram for the case $n=1$ and $k=5$ is shown in Figure~\ref{bifurcation diagram for C D E F}.

\section*{Acknowledgments}
The author expresses deep gratitude to his supervisor, Yulij Sergeevich Ilyashenko, for suggesting the problem, for the constant support, invaluable guidance throughout this work and fruitful discussions. The author is also grateful to Andrei Dukov for carefully reading the manuscript and providing helpful suggestions and corrections.

\end{document}